\documentclass[12pt,oneside]{article}
\usepackage{amsmath, amssymb, amsthm, amscd, amsfonts, graphicx, fancyhdr, color, mathrsfs}
\input epsf.tex
\usepackage{graphicx}
\usepackage{amssymb}
\usepackage{amsthm}
\usepackage{latexsym}
\usepackage{amsmath}
\usepackage{lscape}
\usepackage{palatino, url, multicol, hyperref}
\usepackage{subfig}
\usepackage{multirow}
\usepackage{geometry}                
\geometry{letterpaper}                   
\DeclareGraphicsRule{.tif}{png}{.png}{`convert #1 `dirname #1`/`basename #1 .tif`.png}
\hyphenpenalty=1000
\tolerance=2000
\emergencystretch=10pt


\theoremstyle{theorem}
\newtheorem{theorem}{Theorem}[section]
\newtheorem{lemma}[theorem]{Lemma}

\theoremstyle{definition}
\newtheorem{definition}[theorem]{Definition}

\newtheorem{remark}[theorem]{Remark}
\newtheorem{corollary}[theorem]{Corollary}
\newtheorem{example}[theorem]{Example}


\newcommand{\bm}{Bouw-M\"oller }
\newcommand{\ttm}[4]{\begin{bmatrix} #1 & #2 \\ #3 & #4 \end{bmatrix}} 
\newcommand{\ts}{\theta_{S}} 
\newcommand{\vm}{V_M} 
\newcommand{\sm}{S_M} 


\setlength{\textwidth}{6.0truein}
\setlength{\textheight}{9.0truein}

\setcounter{totalnumber}{50}
\setcounter{topnumber}{50}
\setcounter{bottomnumber}{50}
\title{Cutting Sequences on  Translation Surfaces}
\author{Diana Davis}

\begin{document}
\maketitle

\pagestyle{myheadings}


\section{Introduction}

We are interested in \emph{translation surfaces}, which are created by identifying opposite congruent edges of polygons.  This creates a flat cone surface, a compact surface having a locally Euclidean metric away from finitely many points; these points are conical singularities whose angles are integer multiples of $2\pi$. Such surfaces often contain distinguished graphs: The surfaces are built by identifying the edges of a union of polygons, and the edges of the polygons form the graph.

The simplest example of a translation surface is the flat torus, where the left and right edges of a square are identified, and the top and bottom edges are identified, in the usual way. The torus has been extensively studied and is well understood \cite{Lothaire}, \cite{Series}. Recently, John Smillie and Corinna Ulcigrai studied the regular octagon surface, where the four pairs of parallel edges are identified \cite{SU}, \cite{SU2}. Subsequent work investigated the double pentagon surface, made by gluing the five pairs of parallel edges of a pair of regular pentagons \cite{DFT}, \cite{Davis}.

%
%

A translation surface always has a group of locally affine automorphisms, and this group has a natural homomorphism to a subgroup of $SL_2 (\mathbf{R})$. The surface is called \emph{Veech} if the image of this homomorphism is a lattice. Even when the surface is not Veech, it has some interesting automorphisms.

We are interested in cutting sequences on translation surfaces.
To generate a cutting sequence, one chooses a direction on a surface, and considers the straight-line flow in that direction. One records the labels of the edges hit by an infinite geodesic, which gives us a bi-infinite sequence of edge labels, called the \emph{cutting sequence}. 
 The purpose of this paper is to describe the effect of the parabolic element of the Veech group on cutting sequences on several  families of translation surfaces.

One question of interest is to characterize all possible cutting sequences for a given translation surface. Applying the parabolic element of the Veech group can help us to do this, by generating new valid cutting sequences from a known cutting sequence. Cutting sequences are also interesting because on the square torus, they give a continued fraction expansion for the slope of the associated trajectory \cite{Series}. John Smillie and Corinna Ulcigrai found a construction for the regular octagon surface that gives something analogous to a continued fraction sequence (\cite{SU}, Theorem 2.3.1).

When the image of an affine automorphism is a parabolic element of $SL_2 (\mathbf{R})$, the element is called a \emph{shear}. 
We consider the composition of a vertical reflection with this shear, an action that we call the \emph{flip-shear}. Our main result characterizes the relationship between a cutting sequence corresponding to a trajectory, and the cutting sequence corresponding to the image of that trajectory under the flip-shear. 

%

In general, the shear performs a twist in each cylinder of a surface's cylinder decomposition (Definition \ref{cd}). Each cylinder must be twisted a whole number of times for the shear to be an automorphism, so the magnitude of the twist must be an integer multiple of the modulus of each cylinder. In particular, all the cylinders must have commensurable moduli. 

Square-tiled surfaces are the simplest examples of translation surfaces with commensurable moduli. It turns out (Lemmas \ref{mododd} and \ref{modeven}) that the cylinders of regular polygon surfaces also have commensurable moduli. 
John Smillie and Corinna Ulcigrai gave a rule for the effect of the flip-shear on cutting sequences on the regular octagon surface, and on all regular $2n$-gon surfaces \cite{SU}, \cite{SU2}. Using the same methods, we showed that the same rule  holds for all double regular $n$-gon surfaces for odd $n$ \cite{Davis}.

Irene Bouw and Martin M\"oller \cite{BM} gave algebraic models for a family of translation surfaces, which we now call \emph{\bm surfaces}, whose Veech groups are $(m,n, \infty)$-triangle groups for most $m$ and $n$. This generalized earlier work of William Veech \cite{Veech} and Clayton Ward \cite{Ward}. 

Motivated by the goal of understanding Bouw and M\"oller's work, Pat Hooper \cite{Hooper} and independently Ronen Mukamel defined a doubly-indexed family of translation surfaces by more geometric means. In \cite{Hooper}, Hooper described several inequivalent ways of presenting these surfaces, both in terms of grid graphs and in terms of polygon gluings. In particular, he gave a polygon decomposition for the $(m,n)$-indexed surface consisting of  $m$ polygons that each have $2n$ edges, which we study in $\mathsection$\ref{bmintro}$-\mathsection$\ref{bmresults}. In each of these surfaces, all  the cylinders have the same modulus. Hooper proved that his surfaces are usually the same as those constructed by Bouw and M\"oller, with possible exceptions when $m=n$ or when $m$ and $n$ are both even. Later, Alex Wright \cite{Wright} showed that the Bouw-M\"oller and Hooper surfaces are in fact the same for all $m$ and $n$.
In this paper, we give a rule (Theorem \ref{bmthm}) for the effect of the flip-shear on cutting sequences on \bm surfaces.

\subsection*{Results presented in this paper}

We characterize the effect of the
flip-shear
  on all \emph{perfect translation surfaces} with \emph{common modulus $M$} (Definition \ref{pts}). These include many families of surfaces, such as:
regular polygon surfaces,
\bm surfaces, and
many rectilinear surfaces.

The results for regular polygon surfaces were already known \cite{Davis}, \cite{SU}, but the results for \bm surfaces and rectilinear surfaces are new. Roughly speaking, given a cutting sequence corresponding to a geodesic trajectory on a perfect translation surface, and the new cutting sequence corresponding to image of that trajectory under the flip-shear, to get from the original sequence to the new sequence, we keep the sandwiched letters, and also keep certain other letters. The kept letters are sandwiched in about half of the cases (see Table $\ref{34kept}$).

In  $\mathsection$\ref{definitions}, we  give the general definitions. In $\mathsection$\ref{arcs}, we discuss the action of the flip-shear in detail. In $\mathsection$\ref{mainresult}, we  prove the main result. In the remainder of the paper, we discuss how the result applies to the regular polygon surfaces ($\mathsection$\ref{regularpolygons}) and \bm surfaces ($\mathsection$\ref{bmintro}$-\mathsection$\ref{bmresults}). For a discussion of rectilinear surfaces, including certain L-shaped tables, see (\cite{thesis}, Chapter 8).

\subsection*{Acknowledgements}
Many thanks to my advisor, Richard Schwartz, generally for his excellent guidance throughout my time in graduate school, and specifically for the multitude of insightful suggestions he made for improving this paper (a subset of my thesis). Corinna Ulcigrai, John Smillie and I spent an enlightening three days in Bristol, UK in February 2012. We had extensive discussions about many aspects of the \bm project, and I benefited from their insights. Sergei Tabachnikov and Pat Hooper shared their questions and insights with me over the course of several years. Ronen Mukamel taught me about cylinder decompositions and moduli.

\section{Definitions}
 \label{definitions} 
 \subsection{Translation surfaces}

\begin{definition} \label{cd}
Let $S$ be a translation surface. A \emph{cylinder decomposition} of $S$ is a partition of $S$ into finitely many flat parallel cylinders. (A \emph{flat} cylinder does not contain a singular point in its interior.) The interiors of these cylinders are pairwise disjoint, and their union is $S$.  Figure \ref{octcyl} shows a cylinder decomposition for the regular octagon surface.
\end{definition}

\begin{figure}[!h] 
\centering
\includegraphics[width=300pt]{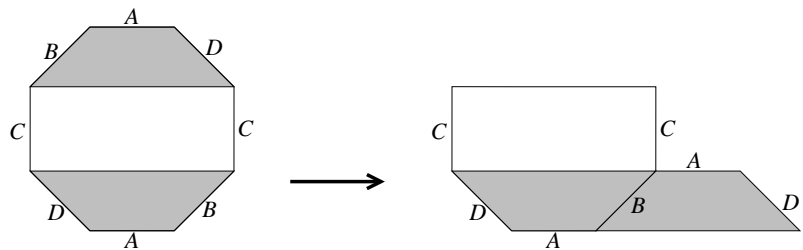}
\begin{quote}\caption{A cylinder decomposition for the regular octagon surface \label{octcyl}} \end{quote}
\vspace{-1em}
\end{figure}

\begin{definition} \label{leveldef}
A polygon is \emph{level} if it never happens that there is a horizontal line that both contains  a vertex of the polygon and crosses an edge of $P$ at an interior point. 
\end{definition}
%

\begin{definition}
A polygon is \emph{vertically symmetric} if it has a line of vertical symmetry. This means that we may translate the polygon in the plane so that the map $(x,y)\mapsto (-x,y)$ is a symmetry of the polygon. 
\end{definition}

\begin{definition} \label{specialdef}
A polygon is \emph{special} if it is convex, level and vertically symmetric. 
%
A translation surface has a \emph{special decomposition} if it can be obtained by gluing together pairs of parallel edges of a finite union of special polygons. We require that the gluing is compatible with the vertical reflection symmetries of the polygons: Suppose that  edge $e_1$ of polygon $P_1$ is glued to edge $e_2$ of polygon $P_2$. Let $I_1$ and $I_2$, respectively, be the vertical reflections preserving $P_1$ and $P_2$. Then $I_1 (e_1)$ is glued to $I_2 (e_2)$. 

If a given translation surface has a special decomposition, we call it a \emph{special translation surface}. Whenever we deal with a special translation surface, we will assume that it is given to us by way of a special decomposition.
\end{definition}

\begin{figure}[!h] 
\centering
\includegraphics[height=90pt]{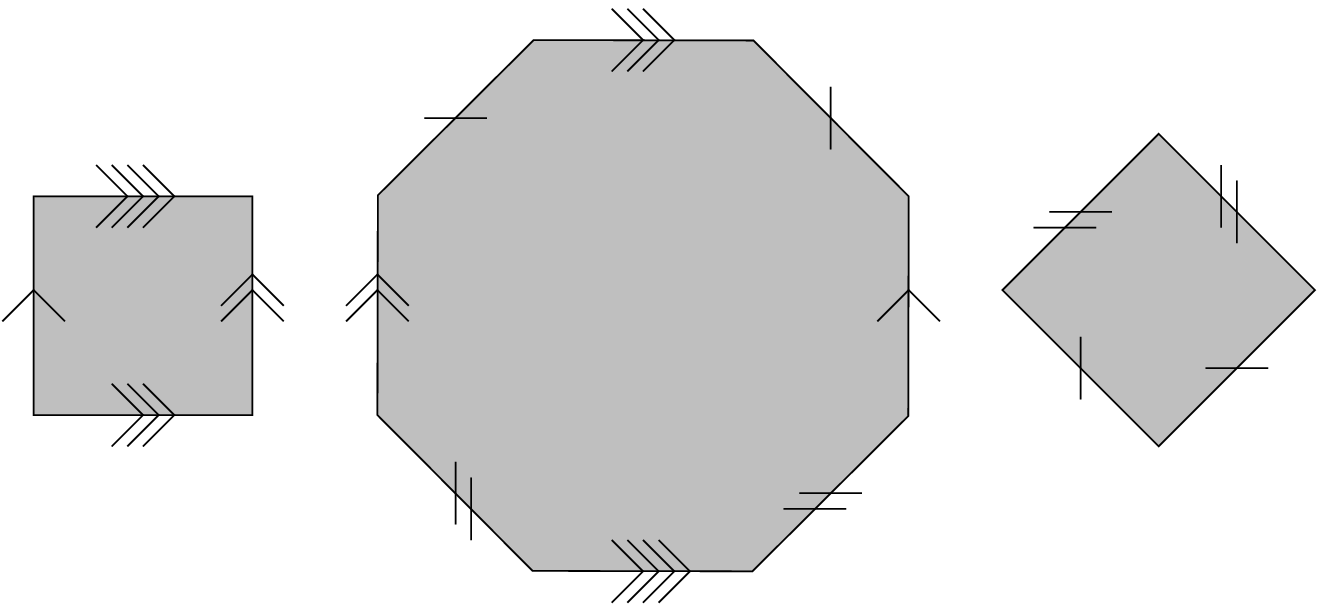} \hspace{3em}
\includegraphics[height=108pt]{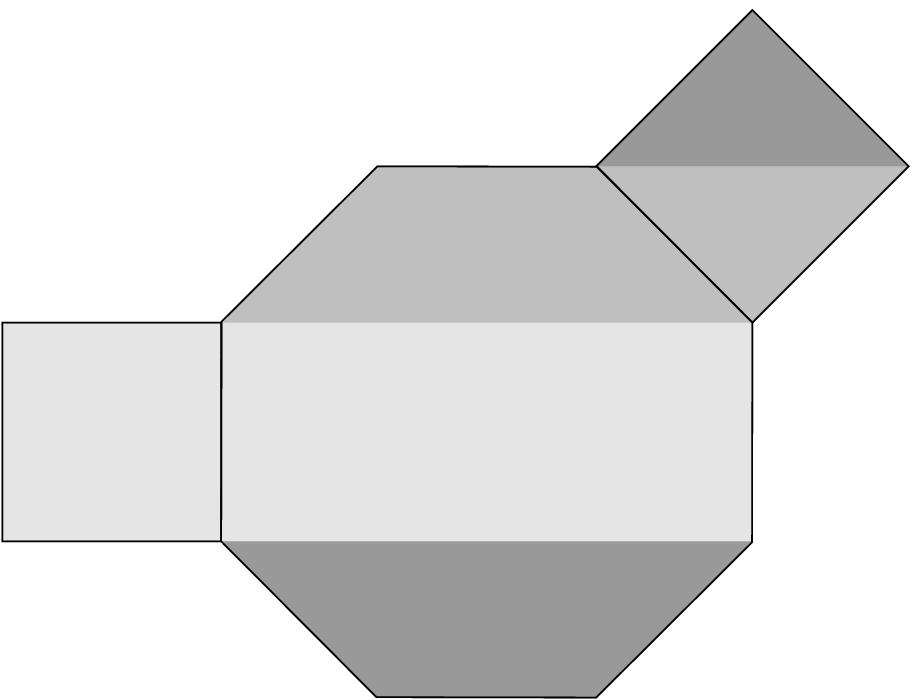}
\begin{quote}\caption{(a) A special decomposition, and (b) the horizontal cylinder decomposition, of the $(3,4)$ \bm \mbox{surface} \label{specdecomp}\label{hcd}} \end{quote}
\end{figure}

Figure \ref{specdecomp} (a) shows a special decomposition of the $(3,4)$ \bm surface. It turns out that all  \bm surfaces have special decompositions (Lemma \ref{bmspecial}).

\begin{lemma} \label{hcdexists}
A special surface has a canonical cylinder decomposition, in which all the cylinders are horizontal.
\end{lemma}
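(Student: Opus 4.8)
The plan is to build the decomposition explicitly from the special polygons and then check that the pieces assemble into horizontal cylinders. In each special polygon $P$ of the given special decomposition, I would draw the horizontal line through every vertex of $P$. These lines cut $P$ into finitely many horizontal strips, and I claim each strip is a trapezoid whose two non-horizontal sides are each a single full edge of $P$.

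The two ingredients that make this work are convexity and the level condition (Definition \ref{leveldef}). First, because $P$ is level, no edge of $P$ can be subdivided by a cut line: if an edge $e$ ran from height $h_1$ to height $h_2$ and some vertex $w$ had height strictly between $h_1$ and $h_2$, then the horizontal line through $w$ would cross $e$ at an interior point, contradicting levelness. Hence between the heights of the two endpoints of any non-horizontal edge there is no vertex, so each such edge is the entire left or right side of exactly one strip. Second, because $P$ is convex, its intersection with any horizontal line is a single segment, so each strip is bounded above and below by horizontal cuts and on the two sides by single edges --- a (possibly degenerate) trapezoid. Horizontal edges, if any, occur only at the extreme top or bottom and serve as strip boundaries rather than sides.

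Next I would show the strips glue into cylinders. Each side edge $e_1$ of a strip $T$ is glued, by the special decomposition, to a parallel edge $e_2$ of some polygon $P_2$; by the previous paragraph $e_2$ is the full side of a single strip $T'$. Since gluings are translations, they preserve horizontal directions and vertical lengths: the identification $\tau(e_1)=e_2$ matches endpoints to endpoints, so $T'$ has the same vertical extent as $T$, and a horizontal trajectory leaving $T$ through $e_1$ re-enters through $e_2$ into $T'$, exits $T'$ through its opposite side into the next strip, and so on. Because the interior of the surface is flat, the two strips meet along $e_1=e_2$ with no cone angle, so strips sharing a side edge have aligned tops and bottoms. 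Following side-edge gluings produces a chain of equal-height strips glued cyclically; since there are only finitely many strips the chain closes up into an annulus whose core curves are horizontal, i.e. a horizontal cylinder.

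Finally I would verify the three defining properties of a cylinder decomposition (Definition \ref{cd}). The cylinder interiors are flat because the only possible cone points of $S$ lie at images of polygon vertices, and every vertex lies on a horizontal cut line, hence on a cylinder boundary; interior points of glued side edges carry total angle $2\pi$ and are not singular. The strips tile each polygon with disjoint interiors, and gluing preserves this, so the cylinders have pairwise disjoint interiors and their union is all of $S$. The decomposition is canonical because it is determined entirely by the horizontal lines through the vertices, which are intrinsic to the special decomposition. The main obstacle to watch is precisely the gluing-compatibility step: one must use that the identifications are translations (preserving the horizontal foliation and vertical distances) together with levelness (ruling out a side edge being split across cut lines) to guarantee that a side edge of one strip is matched exactly to a side edge of another strip of the same height, with no partial overlaps.
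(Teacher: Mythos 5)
Your proposal is correct and takes essentially the same approach as the paper: the paper's proof also forms the union of horizontal segments through polygon vertices, observes that they close up into circles, and lets these divide the surface into horizontal cylinders. You simply make explicit the details the paper calls ``clear'' --- using levelness to show no edge is split by a cut line, convexity to show the strips are trapezoids, and the translation gluings to show the strips chain up into closed annuli.
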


\begin{proof}
Consider the union of horizontal line segments connecting vertices of polygons in the special decomposition. These line segments piece together to give compact $1$-manifolds without boundary, and hence circles. These circles are horizontal on the surface, and divide it into cylinders. This is clearly a cylinder partition.
\end{proof}

Figure \ref{hcd} (b) shows how the special decomposition in Figure \ref{specdecomp} (a) gives rise to a horizontal cylinder decomposition on the surface. Each cylinder is shaded a different color.


\begin{definition}
For us, an \emph{isosceles trapezoid} is a vertically symmetric trapezoid or a vertically symmetric isosceles triangle. We think of the triangle as being a degenerate case of the trapezoid. The three main cases are
generic isosceles trapezoid, an isosceles triangle, and a rectangle.
\end{definition}


\begin{lemma} \label{trappart}
Let $S$ be a special translation surface. Assume that we have equipped $S$ with its canonical cylinder decomposition. Each nonempty intersection of a cylinder with a special polygon is an isosceles trapezoid. Hence, each cylinder in the cylinder decomposition has a partition into isosceles trapezoids.
\end{lemma}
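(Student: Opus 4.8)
The plan is to reduce everything to a single special polygon $P$ and a single cylinder $C$, and to show that each connected component of $C \cap P$ is an isosceles trapezoid; partitioning the whole cylinder then follows by letting $P$ range over the polygons of the special decomposition. So let $R$ be a connected component of $C \cap P$. By Lemma \ref{hcdexists} the boundary circles of the cylinder decomposition are unions of horizontal segments joining vertices of the polygons. Hence $R$ is bounded above and below by horizontal segments lying at two heights $a < b$, and laterally by parts of $\partial P$. The crux is to control exactly where these horizontal cuts occur within $P$, and for this the \emph{level} hypothesis is what does the work.

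The key claim I would establish is that $P$ has no vertex whose height lies strictly between $a$ and $b$. Suppose $v = (x_0, y_0)$ were such a vertex, with $a < y_0 < b$. Since $P$ is level, the horizontal line at height $y_0$ meets $\partial P$ only at vertices; since $P$ is convex and vertically symmetric, this line meets $P$ in a chord joining $v$ to its mirror image $(-x_0, y_0)$ (degenerating to the single point $v$ if $v$ lies on the axis of symmetry). In either case this chord is a horizontal segment joining vertices of $P$, hence part of a boundary circle by Lemma \ref{hcdexists}. But it lies in the open strip $a < y < b$, so it would cross the interior of $R$, contradicting that $R$ is a component of the open cylinder $C$. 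The same level-plus-symmetry argument shows that the top and bottom segments of $R$ themselves join vertices of $P$, so $a$ and $b$ are vertex heights.

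Given the claim, on the open strip $a < y < b$ the boundary $\partial P$ contains no vertex, so by convexity the left and right sides of $R$ are each a single straight segment contained in one edge of $P$. Thus $R$ is bounded by two parallel horizontal sides and two straight lateral sides, i.e. a trapezoid, degenerating to an isosceles triangle when one horizontal side has length zero and to a rectangle when the lateral sides are vertical. To see it is \emph{isosceles}, I would invoke vertical symmetry: the reflection $(x,y)\mapsto(-x,y)$ preserves $P$ and fixes every horizontal line, so it preserves the strip $T := P \cap \{a \le y \le b\}$. This strip is an intersection of convex sets, hence connected, and by the claim its interior meets no boundary circle, so $T = R$. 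Therefore the reflection maps $R$ to itself, making $R$ vertically symmetric; a vertically symmetric trapezoid is precisely an isosceles trapezoid in the sense defined above. Since every point of $C$ lies in some polygon $P$, the cylinder $C$ is the union of the components $R$ just described, which are the trapezoids of the asserted partition.

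The step I expect to be the main obstacle is the key claim, and more generally pinning down that every horizontal cut inside $P$ sits at a vertex height of $P$. This is exactly where levelness is indispensable: without it, a horizontal segment of a boundary circle entering $P$ through a glued edge could terminate at an \emph{interior} point of an edge rather than at a vertex, the chord through an interior-height vertex would fail to be part of the horizontal graph of Lemma \ref{hcdexists}, and the resulting strips could fail to align with the cylinder boundaries. Convexity and vertical symmetry then upgrade the resulting ``clean'' strips to isosceles trapezoids, but levelness is the hypothesis that makes the strips well-defined in the first place.
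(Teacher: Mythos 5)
Your proof is correct and takes essentially the same approach as the paper, whose own proof merely points to the construction in Lemma \ref{hcdexists} and Figure \ref{hcd} (b) and declares the result clear. You supply exactly the details the paper omits---levelness forces every horizontal cut to be a full-width chord at a vertex height (so no vertex, hence no cut, lies strictly inside a strip), convexity makes each lateral side a single edge segment, and vertical symmetry upgrades the trapezoid to an isosceles one---so there is nothing to correct.
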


\begin{proof}
Referring to the proof of Lemma \ref{hcdexists}, Figure \ref{hcd} (b) shows how each region between two horizontal segments is an isosceles trapezoid (a rectangle, isosceles triangle or isosceles trapezoid). The result of the lemma is clear from this fact.
\end{proof}

%


\begin{definition}
A \emph{level automorphism} of a translation surface is a locally affine automorphism that preserves the horizontal direction. 
\end{definition}

Figure \ref{affauto} shows two examples of level automorphisms on the regular octagon surface. One is known as a \emph{flip} and the other is known as a \emph{shear}. The flip has order $2$, and the shear has infinite order. The shear fixes each of the drawn horizontal line segments pointwise; it performs a Dehn twist in each horizontal cylinder. If $R$ is the flip and $S$ is the shear, then the middle picture in Figure \ref{affauto} uses the notation $A'=R(A)$, etc. and the right picture uses the notation $A' = S(A)$, etc.

\begin{figure}[!h] 
\centering
\includegraphics[width=400pt]{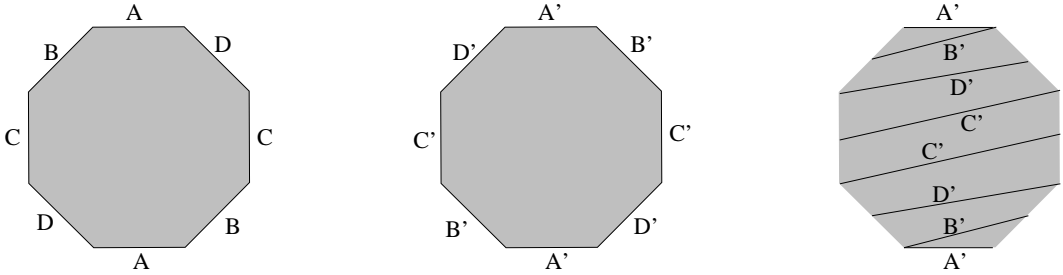}
\begin{quote}\caption{The  octagon surface, and its images under the \emph{flip} and the \emph{shear}  \label{affauto}} \end{quote}
\vspace{-2em}
\end{figure}


\begin{lemma}\label{flipexists}
Let $S$ be a special translation surface. Then $S$ admits an order-$2$ level automorphism.
\end{lemma}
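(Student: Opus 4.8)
The plan is to build the order-$2$ automorphism directly out of the vertical reflection symmetries that come packaged with a special decomposition. Suppose $S$ is glued from special polygons $P_1,\dots,P_k$, and let $I_i$ denote the reflection of $P_i$ across its own vertical axis of symmetry, which exists because each $P_i$ is vertically symmetric (Definition \ref{specialdef}). Writing $I_i(x,y)=(2a_i-x,\,y)$ for the appropriate axis $x=a_i$, each $I_i$ is an isometry of $P_i$ onto itself whose derivative is the constant matrix $\tts{-1}{0}{0}{1}$. I would define a candidate map $\Phi\colon S\to S$ by declaring $\Phi|_{P_i}=I_i$ on the interior of each polygon. Since the derivative is everywhere $\tts{-1}{0}{0}{1}$, the map $\Phi$ preserves the horizontal direction (it sends horizontal vectors to horizontal vectors), so it will be a level automorphism; and since $I_i\circ I_i=\mathrm{id}$ on each $P_i$, the map will satisfy $\Phi^2=\mathrm{id}$, giving order $2$ (it is nontrivial because each $P_i$ is a genuine polygon). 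This $\Phi$ is exactly the \emph{flip} pictured in Figure \ref{affauto}.

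The substance of the proof is checking that $\Phi$ descends to a well-defined map on the quotient surface $S$, i.e.\ that it respects the edge identifications. Concretely, suppose an edge $e_1\subset P_1$ is glued to an edge $e_2\subset P_2$ by the translation $\tau=T_v$, so that $p$ and $\tau(p)$ represent the same point of $S$ for each $p\in e_1$. I must show that $I_1(p)$ and $I_2(\tau(p))$ also represent the same point of $S$. The compatibility clause of Definition \ref{specialdef} tells me that $I_1(e_1)$ is glued to $I_2(e_2)$; it remains to verify that under this gluing the point $I_1(p)$ is sent precisely to $I_2(\tau(p))$.

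This last verification is the main obstacle, and I would handle it with the conjugation behavior of vertical reflections: a direct computation gives $I_2\circ T_v\circ I_1 = T_{v'}$ with $v'=\bigl(2(a_2-a_1)-v_1,\;v_2\bigr)$, so $I_2\circ\tau\circ I_1$ is again a \emph{translation} carrying $I_1(e_1)$ onto $I_2(e_2)$. Because a parallel pair of edges in a translation surface is identified by the unique orientation-consistent translation between them, this forces $I_2\circ\tau\circ I_1$ to coincide with the gluing map of $I_1(e_1)$ to $I_2(e_2)$, which is exactly the identity $I_2(\tau(p))=\sigma\bigl(I_1(p)\bigr)$ (with $\sigma$ the gluing map) that I need. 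The one point requiring care is that the two edge-endpoints are matched consistently, ruling out the reversed translation; this follows from tracking the vertical symmetry through the gluing. Once well-definedness is established, continuity and bijectivity of $\Phi$ are automatic (it is piecewise an isometry and an involution), the extension over the vertices and cone points follows by continuity, and the derivative computation already shows $\Phi$ is level of order $2$, completing the proof.
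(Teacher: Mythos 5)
Your proposal is correct and follows the same route as the paper: reflect each special polygon through its own vertical axis of symmetry and invoke the compatibility condition of Definition \ref{specialdef} to see that these reflections glue into a global automorphism. The paper states this in two sentences and leaves the verification implicit; your conjugation computation $I_2\circ T_v\circ I_1 = T_{v'}$, together with the uniqueness of the translation carrying one edge onto a parallel congruent edge, is a correct filling-in of exactly that implicit step.
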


\begin{proof}
One can separately reflect each polygon in the special decomposition through its line of symmetry. The compatibility condition for the gluings guarantees that these separate maps fit together to give a global automorphism of $S$.
\end{proof}

\begin{definition}
Given a special translation surface, we call its order-$2$ affine element from Lemma \ref{flipexists} \emph{the flip} and denote it by $R$.
\end{definition}

We can use the existence of the flip $R$ to give more information about the nature of the trapezoid decompositions of the cylinders in the horizontal cylinder decomposition.

\begin{lemma}
Suppose that $S$ is a special translation surface. Let $\mathscr{C}$ be one of the cylinders in the horizontal cylinder decomposition of $S$. Then $\mathscr{C}$ is partitioned into one or two  trapezoids. In the case where $\mathscr{C}$ is partitioned into one trapezoid, this trapezoid is a rectangle.
\end{lemma}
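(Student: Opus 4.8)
The plan is to exploit the flip $R$ furnished by Lemma \ref{flipexists}, which on each special polygon is the vertical reflection $(x,y)\mapsto(-x,y)$. The first observation is that this reflection preserves every horizontal level, since it fixes the $y$-coordinate. Each trapezoid of the partition (Lemma \ref{trappart}) is exactly the intersection of a polygon with the horizontal band $\mathscr{C}$, i.e.\ a horizontal slice of the polygon between two consecutive vertex-heights; because the slice is a union of levels, $R$ carries it to itself. Consequently $R$ fixes each trapezoid setwise, and hence fixes $\mathscr{C}$ itself setwise (the flip cannot move a trapezoid into a different cylinder). Moreover, restricted to a trapezoid, $R$ is the reflection across that trapezoid's vertical axis of symmetry, so this axis is fixed pointwise.

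Next I would analyze $R$ as a global isometry of the single cylinder $\mathscr{C}$. Unrolling $\mathscr{C}$ as $(\mathbf{R}/L\mathbf{Z})\times[0,h]$, the map $R$ preserves the $y$-coordinate, so it fixes each boundary circle rather than swapping them, and it reverses orientation; an orientation-reversing isometry of this form is a reflection $x\mapsto c-x$, which has exactly two pointwise-fixed vertical lines. Since distinct trapezoids occupy distinct horizontal positions around $\mathscr{C}$, their axes of symmetry are distinct, and each such axis is a fixed vertical line of $R$. As there are only two fixed vertical lines available, $\mathscr{C}$ contains at most two trapezoids; it contains at least one since it is nonempty. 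This fixed-line count is the conceptual heart of the argument.

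Finally, in the case of a single trapezoid $T$, I would use the fact that closing $\mathscr{C}$ into a cylinder identifies the left leg of $T$ with its right leg, and that on a translation surface this identification is by a translation. Writing $T$ with bottom edge of length $2b$ and top edge of length $2t$, the requirement that a single horizontal translation simultaneously match the two top endpoints and the two bottom endpoints forces $b=t$, so $T$ is a rectangle; equivalently, the second fixed line of $R$ must coincide with the seam along which the legs are glued, and it is fixed only when that leg is vertical, again giving $b=t$. I expect the main obstacle to be the clean bookkeeping of this gluing—tracking orientations and matching endpoints—together with ruling out the degenerate isosceles triangle ($t=0$), which would close up to a cone point rather than to a flat cylinder and so cannot be the single trapezoid.
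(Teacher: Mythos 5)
Your proposal is correct and follows essentially the same route as the paper: the paper likewise observes that the flip $R$ preserves $\mathscr{C}$, that $R|_\mathscr{C}$ is an orientation-reversing isometry fixing exactly two vertical segments, and that each trapezoid's axis of symmetry must be one of them, giving at most two trapezoids; for the single-trapezoid case it notes the two legs are glued by a translation, hence parallel, hence the trapezoid is a rectangle. Your explicit endpoint computation ($b=t$) and your ruling out of the degenerate triangle are just more detailed versions of that same argument.
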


\begin{proof}
For the first claim, note that the flip $R$ preserves each cylinder in the cylinder decomposition. The restriction $R|_\mathscr{C}$ is an orientation-reversing isometry of $\mathscr{C}$. Any such isometry fixes exactly two vertical line segments in $\mathscr{C}$. These two segments are diametrically opposed. At the same time, by construction, $R$ fixes the vertical segment of symmetry of each trapezoid in the decomposition. Thus, there can be at most two such trapezoids.

For the second claim, suppose that $\mathscr{C}$ is decomposed into a single trapezoid. This means that opposite sides of the trapezoid are glued together. Thus, by definition, they are parallel, so the trapezoid is a rectangle. 
\end{proof}

So far, we have exhibited one level automorphism on a special translation surface. We will exhibit two more. First, we need several more definitions.

\begin{definition}
The \emph{modulus} of a horizontal cylinder is the ratio $\frac W H .$
Here $H$ is the height of the cylinder and $W$ is the length of the horizontal translation needed to identify the opposite edges of a fundamental domain for the cylinder. 
\end{definition}

%

\begin{definition}  \label{casedefs} \label{pcd}
Let $S$ be a special translation surface equipped with its canonical horizontal cylinder decomposition. We call the cylinder decomposition \emph{perfect} if there is some number $M$ such that the following is true of each cylinder $\mathscr{C}$ of the decomposition:
\begin{itemize}
\item $\mathscr{C}$ has modulus $M$ and is decomposed into two isosceles trapezoids; or
\item $\mathscr{C}$ has modulus $M/2$ and is decomposed into a single rectangle.
\end{itemize}
We call $M$ the \emph{common modulus} of the decomposition. If the first case occurs, we call $\mathscr{C}$ \emph{typical}. If the second option occurs, we call $\mathscr{C}$ \emph{exceptional}.
\end{definition}

\begin{remark} It might seem more natural to require that, in the exceptional case, the rectangle have modulus $M$. However, when we take the double cover of the cylinder in this case, we get a cylinder of modulus $M$ that is decomposed into two trapezoids (rectangles). This turns out to be more natural for our purposes.
\end{remark}



\begin{lemma} \label{shearexists}
Let $S$ be a special translation surface with a perfect horizontal cylinder decomposition of common modulus $M$. Then $S$ admits an infinite-order level automorphism that fixes the tops and bottoms of the cylinders pointwise, and has derivative
$$ \ttm 1M01.$$
\end{lemma}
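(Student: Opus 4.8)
The plan is to build the automorphism one cylinder at a time as an appropriate power of the Dehn twist, and then to check that the pieces agree on their common boundaries so that they assemble into a single globally defined affine map. Recall that for a horizontal cylinder of height $H$ and circumference $W$, hence of modulus $W/H$, parametrized by $(x,y)$ with $x$ taken modulo $W$ and $0 \le y \le H$, the map
$$ \tau_k(x,y) = \left( x + k\,\tfrac{W}{H}\, y,\ y \right) $$
is the $k$-fold Dehn twist. It fixes the bottom circle $y=0$ pointwise; it sends the top circle $y=H$ to itself via $x \mapsto x + kW \equiv x \pmod{W}$, so that circle is fixed pointwise as well; and it has constant derivative $\ttm{1}{kW/H}{0}{1}$, where $W/H$ is the modulus.

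First I would assign to each cylinder the number of twists needed so that every piece has the same derivative $\ttm{1}{M}{0}{1}$. On a \emph{typical} cylinder the modulus is $M$, so the single twist $\tau_1$ already has derivative $\ttm{1}{M}{0}{1}$. On an \emph{exceptional} cylinder the modulus is $M/2$, so I would use the double twist $\tau_2$, whose derivative is $\ttm{1}{2 \cdot M/2}{0}{1} = \ttm{1}{M}{0}{1}$; this is exactly the reason the definition of a perfect decomposition assigns the exceptional cylinders modulus $M/2$ rather than $M$. By the computation above, each of these per-cylinder maps fixes both boundary circles of its cylinder pointwise.

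Next I would glue the pieces together. The surface $S$ is the union of its closed cylinders, and two distinct cylinders meet only along the horizontal boundary circles of the canonical decomposition (Lemma \ref{hcdexists}). Since every per-cylinder map restricts to the identity on these circles, the maps agree wherever two cylinders meet, so they patch to a well-defined homeomorphism $\sigma$ of $S$. On the interior of each cylinder $\sigma$ is affine with derivative $\ttm{1}{M}{0}{1}$, and across a (nonsingular) boundary circle the two adjacent affine pieces share both their values on the circle and their common derivative, so $\sigma$ is locally affine with this constant derivative throughout. In particular $\sigma$ preserves the horizontal direction and is therefore a level automorphism. Finally, since $M \ne 0$, the powers $\ttm{1}{nM}{0}{1}$ are pairwise distinct, so the derivative has infinite order in $SL_2(\mathbf{R})$ and hence $\sigma$ does too.

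The main point to verify carefully is the gluing step: that fixing each boundary circle pointwise, including the cone points that lie on these circles, is precisely what makes the separately defined twists compatible, so that no mismatch or loss of the affine structure occurs along the singular horizontal leaves. Alongside this is the bookkeeping in the exceptional case, where one must confirm that \emph{two} Dehn twists, not one, are required to match the common derivative $\ttm{1}{M}{0}{1}$ forced by the typical cylinders. The remainder is the routine derivative computation recorded above.
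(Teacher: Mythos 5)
Your proof is correct and follows essentially the same route as the paper, which simply notes that each cylinder carries a unique affine automorphism with the desired derivative fixing its boundaries, and that these piece together globally. You fill in the details the paper leaves implicit — the explicit twist formula, the use of $\tau_1$ on typical cylinders versus $\tau_2$ on exceptional ones of modulus $M/2$, and the check that pointwise-fixed boundary circles make the patched map locally affine — all of which is sound.
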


\begin{proof}
In each cylinder, there is a unique affine automorphism that has the desired effect and has the desired derivative. These automorphisms piece together to give the global automorphisms of the surface.
\end{proof}

\begin{definition}
Let $S$ be a special translation surface whose horizontal cylinder decomposition is perfect and has common modulus $M$. We call the infinite-order level automorphism from Lemma \ref{shearexists} \emph{the shear}, and we denote it by $S_M$. 
\end{definition}

\begin{definition} \label{vm}
Let $S$ be a special translation surface whose horizontal cylinder decomposition is perfect and has common modulus $M$. (This guarantees the existence of $R$ and $S_M$.) Then we define
$$V_M = S_M \circ R.$$
We call $V_M$ the \emph{flip-shear}. Corresponding to these elements, we have the composition of derivatives
$$V_M' = S_M' \circ R'.$$
In terms of matrices, this is
$$ \ttm 1{-M}01 = \ttm 1M01 \circ \ttm {-1}001.$$
\end{definition}

\begin{definition} \label{pts}
A \emph{perfect translation surface} is a special translation surface whose cylinder decomposition is perfect. 
\end{definition}

A perfect translation surface admits the level automorphisms $R,S_M$ and $V_M$, where $M$ is the common modulus of the cylinder decomposition. In the next chapter, we will analyze how these elements act on certain distinguished arcs in a perfect translation surface.

\begin{remark} All regular polygon surfaces (with a horizontal edge), all \bm surfaces, and certain square-tiled surfaces are perfect translation surfaces. For regular polygon surfaces, the common modulus is $M=2\cot \pi/n$ (Lemmas \ref{mododd}, \ref{modeven}). For the $(m,n)$ \bm surface, the common modulus is \mbox{$M=2\cot\pi/n +2 \frac{\cos \pi/m}{\sin \pi/n}$} (see Lemma \ref{modbm} or \cite{thesis}, Lemma 6.6). \end{remark}

\subsection{Trajectories and cutting sequences}

The purpose of this paper is to describe the action of $V_M$ on cutting sequences associated to geodesic trajectories on perfect translation surfaces. We analyze $V_M$ because $V_M$, rotations, and reflections generate all the symmetries of the surface. The actions of rotations and reflections are easy to describe, but the action of $V_M$ takes more work. 

Since $V_M = S_M\circ R$, it is also true that $S_M$, rotations and reflections generate the group of symmetries of the surface. So instead of describing the action of $V_M$, we could choose to describe the action of $S_M$. However, the action of $V_M$ turns out to be more elegant than that of $S_M$: The rule for the effect of $S_M$ is essentially ``apply the rule given in this paper for $V_M$, and then permute the edge labels in a certain way.'' So in this paper we give the rule for $V_M$, and from there it is easy to deduce the effect of $S_M$.

If we know something about a certain trajectory, we can apply the symmetries of the surface to learn something about other trajectories on the surface. 
Ideally, we would be able to analyze all the trajectories in some set, and also be able to use the symmetries of the surface to transform all possible trajectories into the set we can analyze, thereby analyzing all possible trajectories. This is possible for regular polygon surfaces, because we can analyze trajectories with $\theta\in[0,\pi/n)$ and then apply the $n$-fold rotational symmetry of the surface to rotate any trajectory into that sector. This is not possible for \bm surfaces, because the $(m,n)$ \bm surface has $n$-fold rotational symmetry, and yet we can only analyze trajectories with $\theta\in[0,\ts)$, where $\ts$ is at most  $\pi/(2n)$. So we cannot analyze trajectories with $\theta\in [\pi/(2n),\pi/n$), which is a limitation of the method. We hope to exploit the surfaces' other symmetries in the future to analyze all possible trajectories.

The main Theorem \ref{mainthm} proves the result for perfect translation surfaces in general, and the rest of the paper applies the result to several families of perfect translation surfaces: Regular polygon surfaces ($\mathsection  \ref{regularpolygons}$) and \bm surfaces ($\mathsection  \ref{bmintro}-\mathsection \ref{bmresults}$). 

We will now define some notation for geodesic trajectories and their associated cutting sequences.

\begin{definition}\label{cs} 
Given a geodesic trajectory on a perfect translation surface, the \emph{cutting sequence} associated to the trajectory is the bi-infinite sequence of polygon edges that the trajectory crosses.  (By convention, if a trajectory hits a vertex, it stops. In that case, the associated cutting sequence is not bi-infinite.) The \emph{derived trajectory} is the image of the trajectory under the flip-shear $V_M$. The \emph{derived sequence} is the cutting sequence associated to the derived trajectory.  
\end{definition}
\begin{lemma} \label{involution}
$ \vm^{-1} =  \vm$, i.e. $\vm$ is an involution.
\end{lemma}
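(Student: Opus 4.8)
The plan is to reduce the whole claim to a single conjugation identity between the flip $R$ and the shear $\sm$, namely
$$ R \circ \sm \circ R = \sm^{-1}. $$
Granting this, and recalling that $R$ is an order-$2$ automorphism (so $R \circ R = \mathrm{id}$), the conclusion is a one-line composition: since $\vm = \sm \circ R$,
$$ \vm \circ \vm = \sm \circ R \circ \sm \circ R = \sm \circ (R \circ \sm \circ R) = \sm \circ \sm^{-1} = \mathrm{id}, $$
which is exactly $\vm^{-1} = \vm$. Note that this works at the level of maps, not merely derivatives, so no separate translation-part bookkeeping is required once the conjugation identity is in hand.

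The heart of the proof is therefore establishing $R \circ \sm \circ R = \sm^{-1}$, and here I would appeal to the uniqueness built into Lemma \ref{shearexists}. Both $\sm^{-1}$ and $R \circ \sm \circ R$ are level automorphisms, and their derivatives agree: directly, $R' \sm' R' = \tts{-1}001 \, \tts{1}{M}01 \, \tts{-1}001 = \tts{1}{-M}01 = (\sm')^{-1}$. The proof of Lemma \ref{shearexists} shows that within each cylinder there is exactly one affine map with a prescribed derivative that fixes the top and bottom circles pointwise, so it suffices to check that each of these two global maps fixes all the tops and bottoms pointwise; that forces them to coincide. For $\sm^{-1}$ this is clear, since $\sm$ itself fixes those circles pointwise. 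For $R \circ \sm \circ R$ I would use two facts recorded earlier: the flip $R$ preserves each cylinder of the horizontal decomposition, and $R$ has derivative $\tts{-1}001$, which preserves the $y$-coordinate and hence carries the top (resp.\ bottom) circle of each cylinder to itself. Thus for a point $p$ on a top or bottom circle, $R(p)$ again lies on a top or bottom circle, $\sm$ fixes it, and a second application of $R$ returns $R(R(p)) = p$; so $R \circ \sm \circ R$ fixes these circles pointwise.

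Combining these observations yields $R \circ \sm \circ R = \sm^{-1}$, and the composition above finishes the argument. The main obstacle — really the only delicate point — is the verification that $R \circ \sm \circ R$ fixes the top and bottom circles \emph{pointwise} rather than merely setwise; this is precisely where it matters that $R$ is a vertical reflection preserving the $y$-direction, and not some other orientation-reversing isometry of the cylinders that might interchange a cylinder's top and bottom. Once that is pinned down, the uniqueness clause of Lemma \ref{shearexists} supplies everything else, and I expect no further computation to be needed.
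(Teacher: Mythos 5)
Your proof is correct, and its skeleton is exactly the paper's: write $\vm\circ\vm = \sm\circ(R\circ\sm\circ R)$, use $R=R^{-1}$, and reduce everything to the conjugation identity $R\circ\sm\circ R=\sm^{-1}$. Where you differ is in how that identity is justified. The paper disposes of it in one sentence (``a reflection composed with a perpendicular shear composed with a reflection is a shear in the opposite direction''), which is immediate for the derivatives but, as you rightly note, is a statement about actual surface automorphisms and not just matrices. You close that gap properly: the derivative computation $R'\sm'R'=(\sm')^{-1}$, the observation that $R$ preserves each horizontal cylinder and carries tops to tops and bottoms to bottoms (since its derivative fixes the $y$-coordinate), hence $R\circ\sm\circ R$ fixes those circles pointwise, and then the uniqueness clause implicit in the proof of Lemma \ref{shearexists} to force $R\circ\sm\circ R=\sm^{-1}$. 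You also correctly identify the one delicate point --- pointwise versus setwise fixing of the boundary circles, and the possibility of $R$ swapping a cylinder's top and bottom --- which the paper's informal justification silently assumes away. So your argument buys rigor at the level of maps rather than derivatives, at the modest cost of invoking Lemma \ref{shearexists} where the paper appeals to geometric intuition; both are sound, and yours would survive scrutiny in a setting where an affine automorphism is not determined by its derivative alone.
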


\begin{proof}
By definition, $\vm = S_M \circ R$. So
$$\vm \circ \vm = \sm \circ R \circ  \sm \circ R = \sm \circ (R \circ  \sm \circ R) = \sm \circ (R \circ  \sm \circ R^{-1}).$$
We have $R=R^{-1}$ because $R$ is a reflection, and thus has order $2$. Also, $R \circ \sm R^{-1} = \sm^{-1}$, because a  reflection composed with a perpendicular shear composed with a  reflection is a  shear in the opposite direction. Hence,
$$\vm \circ \vm =\sm \circ (R \circ  \sm \circ R^{-1}) = \sm \circ \sm^{-1} = Id.$$\end{proof}

We use the following notations only in Lemma  \ref{lookatshears}: Let $\tau$ be a geodesic trajectory on a  perfect translation surface with common modulus $M$, and let $\vm \tau$ be the image of $\tau$ under $\vm$, i.e. the derived trajectory. Let $L=\{A,B,\ldots\}$ be the set of polygon edges, and let $L'=\{A',B',\ldots\}$ be their transformed images. Let $c(\tau)$ be the cutting sequence associated to $\tau$, i.e. the sequence of polygon edges that $\tau$ crosses, so $c(\tau) \in L^\mathbb{Z}$. Let $c'(\tau)$ be the sequence of transformed edges that $\tau$ crosses, so $c'(\tau) \in {L'}^\mathbb{Z}$. Let $c(\vm\tau)$ be the cutting sequence associated to $\vm\tau$, i.e. the \emph{derived} sequence. 

\newpage

\begin{lemma} \label{lookatshears}
$c(\vm\tau) = c'(\tau)$.
\end{lemma}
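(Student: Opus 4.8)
The plan is to use that $\vm$ is a locally affine automorphism of $S$, so that it carries the trajectory $\tau$ to the derived trajectory $\vm\tau$ while simultaneously carrying each polygon edge $X \in L$ to its transformed image $X' = \vm(X) \in L'$. I identify the label sets $L$ and $L'$ through the priming bijection $X \leftrightarrow X'$; under this identification the asserted equality $c(\vm\tau) = c'(\tau)$ means exactly that the $n$-th letter of $c(\vm\tau)$ is $X$ precisely when the $n$-th letter of $c'(\tau)$ is $X'$.

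First I would parametrize $\tau$ as a geodesic $\gamma\colon \mathbb{R}\to S$, so that the derived trajectory is the parametrized path $\vm\circ\gamma$. Since $\vm$ has constant linear part $\vm'$, this is again a geodesic, and I keep the same parameter $t$ for both $\gamma$ and $\vm\circ\gamma$. The crucial pointwise observation is then
$$ \vm\tau \text{ meets } X \text{ at parameter } t \iff \vm(\gamma(t)) \in X \iff \gamma(t) \in \vm^{-1}(X). $$
By Lemma \ref{involution}, $\vm^{-1} = \vm$, so $\vm^{-1}(X) = \vm(X) = X'$; hence $\vm\tau$ crosses $X$ at parameter $t$ exactly when $\tau$ crosses the transformed edge $X'$ at that same parameter $t$.

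This equivalence immediately yields the lemma: because both crossings occur at the identical parameter value, the crossings of $\vm\tau$ with the edges of $L$ and the crossings of $\tau$ with the transformed edges of $L'$ occur in the same order and at the same places, matched edge-for-edge by the priming map. Reading them off in order gives $c(\vm\tau) = c'(\tau)$. The content of the argument is entirely the involution property of Lemma \ref{involution}: this is what makes $\vm^{-1}(X)$ equal to the primed label $X'$ rather than some other transform, so the only thing to watch is that the priming identification is used consistently and that using a common parameter $t$ preserves the order of crossings.
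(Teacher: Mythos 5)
Your proof is correct and is essentially the paper's argument: both hinge on Lemma \ref{involution} to move $\vm$ across the incidence relation, turning crossings of $\vm\tau$ with the edges $L$ into crossings of $\tau$ with the transformed edges $L'$. Your pointwise, parametrized phrasing is just a more explicit version of the paper's set-theoretic computation $c(V_M\tau) = \vm(\vm^{-1}\tau)\cap\vm(L) = \tau\cap L'$, with the added benefit of making the order-preservation of crossings explicit via the common parameter.
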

In words: Determining the derived cutting sequence for a given trajectory is equivalent to determining which sheared edges the trajectory crosses.
\begin{proof} 
Consider $ \vm^{-1}\tau$, the transformed trajectory, and the sequence $c( \vm \tau)$ of polygon edges that it crosses. When we apply $\vm$, it acts on both the trajectory and on the polygon edges, so the intersection points are preserved. By Lemma \ref{involution}, the image of $ \vm\tau$ under   $\vm$ is $ \vm( \vm^{-1}\tau) =\tau$, and the images of the polygon edges under $\vm$ are the transformed edges, so the intersection points are then the intersection between the trajectory $\tau$ and the transformed edges. Symbolically, 
$$c(V_M\tau) =c(V_M^{-1}\tau) =  \vm^{-1}\tau \cap L = \vm(\vm^{-1}\tau) \cap \vm(L) = \tau \cap L' = c'(\tau).$$ \end{proof}


Given a trajectory, recall that the derived sequence is the cutting sequence corresponding to the derived trajectory, which is the image of the original trajectory under $\vm$. The main Theorem \ref{mainthm}, and its applications Theorems \ref{regularsandwich} and \ref{bmthm} to various families of surfaces, give a combinatorial rule for determining the derived sequence from the original cutting sequence. It is useful to introduce the following notation to keep track of various types of edge crossings, which use  in the  statement and proof of the main Theorem \ref{mainthm}.

\begin{definition} \label{zeroone}
On a perfect translation surface $S$, a trajectory with $\theta \in [0,\ts)$ cuts through an edge on the left side (which includes the bottom edge) of a polygon and then an edge on the right side (which includes the top edge) of a given polygon. If the left and right edges are on the same level, we call it type $(0)$. If the right edge is a level above, we call it type $(1)$. Each three-letter sequence in the cutting sequence corresponds to crossing two polygons, which give us four cases (see Figure \ref{horizvertbent}): $(00), (10), (01),$ and $(11)$. 
\end{definition}

\begin{figure}[!h] 
\centering
\includegraphics[width=380pt]{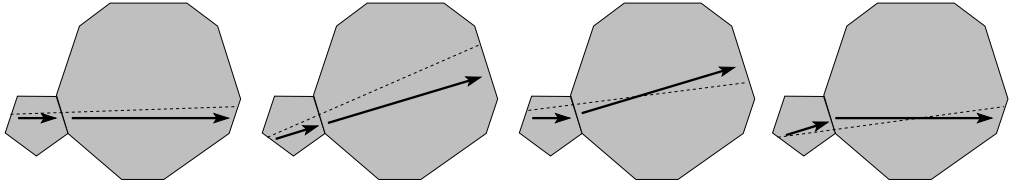}
\begin{quote}\caption{Examples of $(00), (11), (01),$ and $(10)$ cases, with example trajectories (dotted) for each.  \label{horizvertbent}} \end{quote}
\end{figure}

\section{The Action on Edges}
 \label{arcs} \begin{definition}
Let $S$ be a perfect translation surface. A \emph{gluing edge} of $S$ is an arc in $S$ that is the image of a pair of polygon edges under the identification map. Within each cylinder, the gluing edges are  the non-horizontal edges of the trapezoids in the trapezoid partition. Figure \ref{glu} shows a typical cylinder with the gluing edges labeled $A$ and $B$. In an exceptional cylinder that is decomposed into a single rectangle, there is just one gluing edge.
\end{definition}

\begin{figure}[!h] 
\centering
\includegraphics[height=60pt]{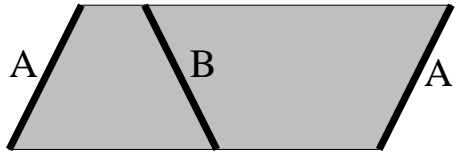}
\begin{quote}\caption{A typical cylinder with its gluing edges $A$ and $B$ \label{glu}} \end{quote}
\end{figure}

\begin{definition} \label{ts}
The \emph{horizontal edges} are the tops and bottoms of the cylinders in the horizontal cylinder decomposition.

The \emph{slanted edges} are the diagonals of positive slope in the trapezoids of the trapezoid decomposition. In terms of the special polygons, each slanted edge joins a vertex on the left side of a polygon to the vertex on the right that is one level above.

In a perfect translation surface $S$, we let $\ts$ be the smallest angle between any slanted edge and the positive horizontal. An example of slanted edges, their associated angles, and $\ts$ is  in Figure \ref{tsdiff}.
\end{definition}

\begin{figure}[!h] 
\centering
\includegraphics[width=200pt]{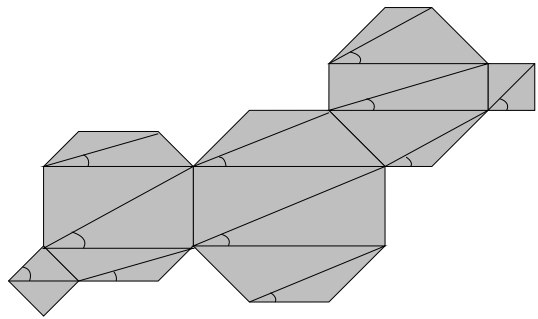}
\begin{quote}\caption{For the $(5,4)$ \bm  surface shown, $\ts$ is the minimum of the marked angles.    \label{tsdiff}} \end{quote}
\end{figure}

 The main Theorem \ref{mainthm}  analyzes a geodesic on a perfect translation surface, where the geodesic's angle is between $0$ and $\ts$.
 
We are interested in the action of $V_M$ on the edges within a cylinder. First, we will describe what happens for a typical cylinder. The degenerate case (when one or more of the trapezoids degenerates to a triangle) is a limiting case of the typical case. The exceptional case (when the cylinder is one rectangle) is a special case of the typical case, using a double cover of the exceptional cylinder.


Figure \ref{unicov} shows a portion of the universal cover of the cylinder $\mathscr{C}$. The universal cover itself is an infinite strip. The cylinder $\mathscr{C}$ is the quotient of the strip by the minimal label-preserving horizontal transformation, which is the generator of the deck transformation group.

\begin{figure}[!h] 
\centering
\includegraphics[width=300pt]{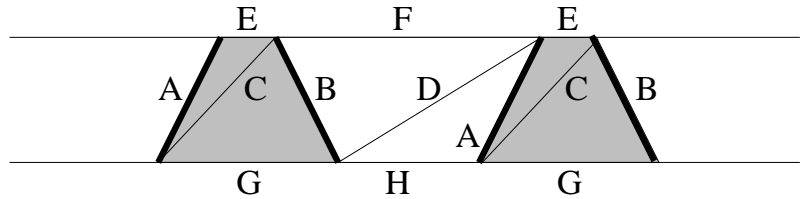}
\begin{quote}\caption{Part of the universal cover of a typical cylinder \label{unicov}} \end{quote}
\end{figure}

We will persistently abuse terminology by saying things like ``The edges labeled $A$ and $B$ in Figure \ref{unicov} are the gluing edges,'' when we really mean that these edges are lifts to the universal cover of the gluing edges.

In Figure \ref{unicov}, $\mathscr{C}$ is decomposed into two trapezoids, one shaded and one unshaded: 
\begin{itemize}
\item Edges $A$ and $B$ are the gluing edges. 
\item Edges $C$ and $D$ are the slanted edges.
\item Edges $E,F,G$ and $H$ are the horizontal edges.
\end{itemize}


Figure \ref{ucflip} (a) shows the action of $R$ on our cylinder. The action of $R$ \emph{does not} lift to an action on the universal cover, so here we are moving the objects in the cylinder by $R$ and then lifting to the universal cover.  In Figure \ref{ucflip}, $a=R(A), b = R(B)$, and so on. 

In the cylinder,  $R$ acts as a symmetry on each of the trapezoids,  swapping the left and right boundaries of each. The effect from Figure \ref{unicov} to Figure \ref{ucflip} (a) on the non-horizontal trapezoid boundaries is that the edge labels $A$ and $B$ have been swapped. $R$ preserves each top and bottom boundary of each trapezoid. The effect from Figure \ref{unicov} to Figure \ref{ucflip} (a) on the top and bottom boundaries is that the edge labels $E,F,G$ and $H$ are preserved.

\newpage
%
Figure \ref{ucshear} (b) shows the action of $V_M$ on $\mathscr{C}$, as depicted in the universal cover. As was the case with $R$, $V_M$ does not lift to an action on the universal cover. Instead, there are two equivalent descriptions of what Figure \ref{ucshear} (b) shows:
\begin{itemize}
\item We let $V_M$ act on  $A,B,C,D,E,F,G,H$  in the cylinder $\mathscr{C}$ (Figure \ref{unicov}), and then lift their images to the universal cover.
\item We lift the action of $S_M$ to the universal cover and let it act on the picture in Figure \ref{ucshear} (a). On the universal cover, $S_M$ is a shear, fixing the bottom boundary and shifting the top boundary to the right by a full modulus distance,  indicated by the horizontal arrow.
\end{itemize}
Since $V_M = R \circ S_M$, these produce the same picture.

\begin{figure}[!h] 
\centering
\includegraphics[width=300pt]{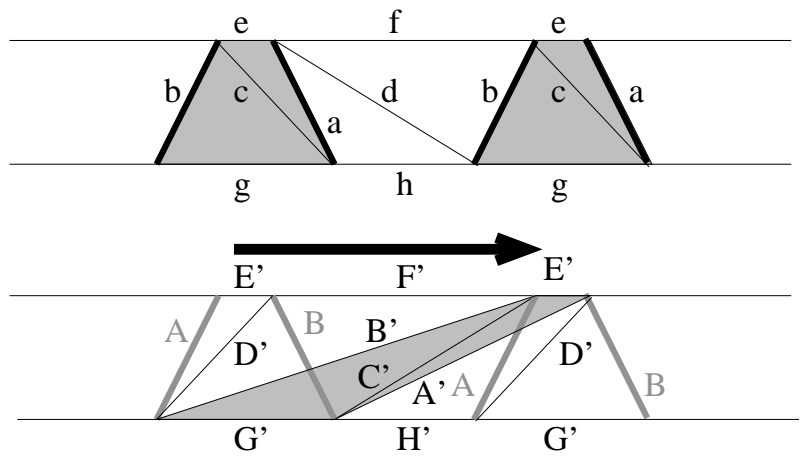}
\begin{quote}\caption{The action of (a) $R$ and (b) $V_M$, depicted in the universal cover \label{ucflip}\label{ucshear}} \end{quote}
\end{figure}


In Figure \ref{ucshear} (b), we have used the notation
$$ A' = S_M (a) = S_M \circ R(A) = V_M (A),$$
and so on.

The original gluing edges $A$ and $B$ are shown in a lighter color. This is important, because the proof of the main Theorem \ref{mainthm} analyzes the configuration of an edge with respect to its image under $V_M$. Here, we can observe that $A'$ intersects $A$, and $B'$ intersects $B$.

The shaded trapezoid in Figure \ref{ucshear} is  (a lift of) $V_M (T)$, where $T$ is the shaded trapezoid in the decomposition of $\mathscr{C}$. This trapezoid is not particularly interesting. More interesting are the \emph{slanted trapezoids}:

\begin{definition} \label{st}
We partition each cylinder into two \emph{slanted trapezoids}, each of which is the region between two slanted edges. 
\end{definition}

Figure \ref{slanted} shows a typical cylinder, depicted in the universal cover. It has two gluing edges ($A$ and $B$), so it has two slanted edges ($C$ and $D$), so it has two slanted trapezoids ($CFDG$ and $DECH$). This is a copy of Figure \ref{unicov}, with different trapezoids shaded. 

\begin{figure}[!h] 
\centering
\includegraphics[width=300pt]{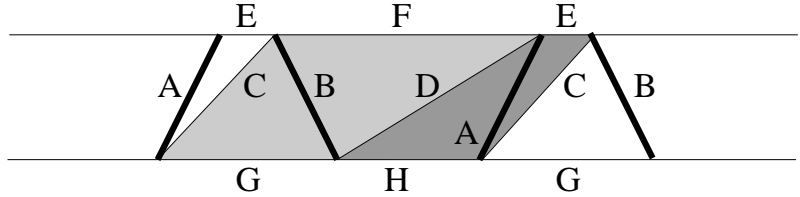} 
\begin{quote}\caption{ A typical cylinder, partitioned into two slanted trapezoids (shaded), depicted in the universal cover    \label{slanted}} \end{quote}
\vspace{-2em}
\end{figure}

We will use slanted trapezoids extensively in proving the main theorem in $\mathsection$\ref{mainresult}. In particular, because we can partition a perfect translation surface into cylinders, and each cylinder into slanted trapezoids, we can partition any perfect translation surface into slanted trapezoids. We will use this partition to partition a geodesic trajectory into finite segments, whose behavior we can easily analyze.

The action is similar in a typical cylinder where one or both of the trapezoids degenerates to a triangle, or where one of the trapezoids degenerates to a slanted edge. 
In an exceptional cylinder, there is only one gluing edge $A$, rather than two gluing edges $A$ and $B$, and the gluing edge is vertical. There is only one slanted edge $C$. We use two copies of the rectangle (one shaded and one unshaded,  as in Figure \ref{ucshear}), which form a double cover of the cylinder, and then the rest of the construction is the same. For expanded details, see (\cite{thesis}, Chapter 3).

%
%
%
%

\section{The Main Result}
 \label{mainresult} 
 
 Recall (Definition \ref{cs}) that the \emph{cutting sequence} associated to a geodesic trajectory on a translation surface is the bi-infinite sequence of polygon edges that the trajectory crosses. The \emph{derived trajectory} is the image of the trajectory under the flip-shear $\vm$, and the \emph{derived sequence} is the cutting sequence associated to the derived trajectory. The purpose of this paper is to determine the relationship between the original cutting sequence and the derived sequence for perfect translation surfaces. Theorem \ref{mainthm} gives the relationship: The derived sequence is a subsequence of the original cutting sequence, obtained by keeping certain letters and removing the rest. It turns out that to determine if a given letter survives into the derived sequence, we only need to look at the letters that precede and follow it, i.e. at the three-letter \emph{word} for which it is the middle letter:

\begin{definition}
Recalling Definition \ref{zeroone}, a \emph{$\mathit{(11)}$ word} is a three-letter sequence corresponding to a trajectory cutting through three edges that form a $(11)$ case. A \emph{$\mathit{(00)}$ word}, a \emph{$\mathit{(01)}$ word} and a \emph{$\mathit{(10)}$ word} are defined analogously.
\end{definition}


\begin{theorem}[Main Theorem] \label{mainthm}
Let $S$ be a perfect translation surface with common modulus $M$.

Consider a geodesic trajectory on the surface whose angle $\theta$ satisfies $0 \leq \theta < \ts$, and its associated cutting sequence. Mark a letter in the  cutting sequence if: 
\begin{itemize}
\item It corresponds to a gluing edge, and it is the middle letter of a $(00)$ or $(11)$ word, or
\item It corresponds to a horizontal edge.
\end{itemize}
Then the derived sequence consists precisely of the marked letters, read from left to right.
\end{theorem}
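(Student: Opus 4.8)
The plan is to reduce the theorem to a local computation inside a single cylinder, performed in the universal cover. By Lemma~\ref{lookatshears}, the derived sequence $c(V_M\tau)$ coincides with $c'(\tau)$, the ordered sequence of \emph{sheared} edges $L'=V_M(L)$ that the original trajectory $\tau$ crosses. Thus it suffices to decide, for each edge that $\tau$ meets, whether $\tau$ also meets its sheared image; the theorem then becomes the claim that this happens exactly for the horizontal edges and for those gluing edges that are the center of a $(00)$ or $(11)$ word.

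First I would dispose of the horizontal edges. The flip $R$ acts within each polygon by reflection across its vertical axis, so it fixes each horizontal (top or bottom) edge setwise, and $S_M$ fixes the tops and bottoms of all cylinders pointwise (Lemma~\ref{shearexists}). Hence $V_M=S_M\circ R$ fixes every horizontal edge setwise, so $E'=E$ for each horizontal edge $E$. Consequently $\tau$ crosses the sheared edge exactly when it crosses the original one, every horizontal-edge letter survives, and no extra horizontal crossings are produced. This is precisely the second bullet of the theorem.

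The heart of the matter is the gluing edges, and here I would localize using the slanted-trapezoid decomposition (Definition~\ref{st}): I partition the surface into slanted trapezoids and cut $\tau$ into the segments lying in each. Because $\theta<\ts$, the trajectory is flatter than every slanted edge, so within each slanted trapezoid $\tau$ enters and exits in a controlled fashion, crossing a predictable set of original and sheared edges, and its crossings occur in the same order as the marked letters. Working in the universal cover of a typical cylinder (Figures~\ref{unicov}, \ref{ucshear}, and \ref{slanted}), where we already know that $A'$ meets $A$ and $B'$ meets $B$, I would check, for each of the four configurations of Definition~\ref{zeroone} (Figure~\ref{horizvertbent}), whether the segment of $\tau$ that crosses the gluing edge $A$ (or $B$) also crosses $A'$ (or $B'$). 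The expected dichotomy is that in the $(00)$ and $(11)$ cases the trajectory is driven across the overlap $A'\cap A$, so the sheared gluing edge is crossed and the letter is marked, whereas in the $(01)$ and $(10)$ cases $\tau$ passes to one side of $A'$ and the letter is dropped. The degenerate cases (a trapezoid collapsing to a triangle or to a single slanted edge) are limits of this picture, and the exceptional cylinder is reduced to the typical case by passing to its double cover.

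The main obstacle is exactly this four-way case analysis: one must pin down the position of each sheared gluing edge relative to $\tau$ inside its slanted trapezoid and verify the crossing/non-crossing alternative in each case, using the bound $\theta<\ts$ to exclude the trajectory crossing the slanted edges in an unexpected order or re-entering the relevant region. Once this local dichotomy is established, the global conclusion follows by assembly: reading the marked letters from left to right reproduces the ordered list of sheared edges crossed by $\tau$, which by Lemma~\ref{lookatshears} is the derived sequence.
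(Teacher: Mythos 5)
Your proposal is correct and follows essentially the same route as the paper: reduction via Lemma~\ref{lookatshears}, partition into slanted trapezoids, universal-cover analysis of a typical cylinder under the angle bound $\theta<\ts$, degenerate cylinders treated as limits and the exceptional cylinder via its double cover, horizontal edges fixed by $V_M$, and a final order-preservation step. The four-way case check you flag as the main obstacle is exactly what the paper carries out in Lemmas~\ref{hitisgood} and~\ref{goodishit}, using the elementary Corollary~\ref{zerocor} about which diagonals of a trapezoid a crossing segment must meet.
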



{\bf Outline of Proof:}  Lemma \ref{lookatshears} shows that determining the derived cutting sequence for a given trajectory is equivalent to determining which sheared edges the trajectory crosses. To determine which sheared edges a trajectory crosses, we partition the trajectory into disjoint segments whose union is the entire trajectory: We partition the translation surface  into finitely many disjoint slanted trapezoids (Definition \ref{st}), and each finite segment of the trajectory that we analyze is the part of the trajectory that lies in a particular slanted trapezoid. Then we analyze what happens on each of those segments, and then pool the information to determine what happens for the entire infinite trajectory. 

To determine the derived sequence from the original cutting sequence, the key question is whether a line segment that passes through edges $E_1 E_2 E_3$ must cross edge $E_2' = \vm(E_2)$, or not. This is a topological question, based on the location of edge $E_2'$ in relation to the edges $E_1, E_2, E_3$. The location of the segment $E_2'$ depends on the configuration of the cylinder for which the edge $E_2$ is a gluing edge: whether it is typical, degenerate or exceptional.  Lemma \ref{zero} and its Corollary \ref{zerocor} are simple topological observations about configurations of line segments in quadrilaterals and trapezoids. Lemmas \ref{hitisgood}-\ref{horiz} apply these observations to each of the disjoint segments of the trajectory. 


The local analysis allows us to locally determine the effect of $\vm$ on each segment, and thus the effect on its associated three-letter word in the cutting sequence, telling us if the middle letter survives into the derived sequence. We then move across the sequence from left to right, looking at the middle letter in each overlapping three-letter word and marking those indicated in Theorem \ref{mainthm}. After we have done this, the derived sequence is  the marked letters, read from left to right.



\begin{lemma} \label{zero} 
Consider a convex quadrilateral with its two diagonals, and a trajectory that intersects the quadrilateral (Figure \ref{quadtrap} (a)). The trajectory crosses both diagonals if and only if the edges it crosses are non-adjacent. In the adjacent case, a diagonal is crossed if and only if it shares the common vertex of the two edges.
\end{lemma}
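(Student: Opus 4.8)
The plan is to exploit convexity to reduce the statement to two independent ``which side of a diagonal'' determinations. First I would fix cyclic labels $P_1, P_2, P_3, P_4$ for the vertices, write $e_1 = P_1P_2$, $e_2 = P_2 P_3$, $e_3 = P_3 P_4$, $e_4 = P_4 P_1$ for the edges, and set $d_1 = P_1 P_3$ and $d_2 = P_2 P_4$ for the two diagonals. Since the quadrilateral is convex, the diagonals meet at a single interior point $O$ and each lies entirely inside the quadrilateral. By the convention that a trajectory stops when it meets a vertex, I may assume the trajectory enters and leaves through the interiors of two distinct edges, so it is a chord whose two endpoints lie in the interiors of two of the $e_i$ (Figure \ref{quadtrap}(a)).

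The key observation is that the line containing $d_1$ separates the closed quadrilateral into the two triangles $P_1 P_2 P_3$ and $P_1 P_3 P_4$, placing $e_1, e_2$ on one side and $e_3, e_4$ on the other; likewise $d_2$ places $e_2, e_3$ on one side and $e_4, e_1$ on the other. Because the chord's endpoints are interior edge points, the chord crosses a given diagonal if and only if its two endpoints lie on edges assigned to opposite sides of that diagonal. This reduces the entire lemma to reading off the two partitions $\{e_1, e_2 \mid e_3, e_4\}$ and $\{e_2, e_3 \mid e_4, e_1\}$.

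From these partitions the conclusion follows by a short check. If the crossed edges are non-adjacent, i.e.\ $\{e_1, e_3\}$ or $\{e_2, e_4\}$, then they land on opposite sides of \emph{both} partitions, so both diagonals are crossed. If the crossed edges are adjacent, say $e_i$ and $e_{i+1}$ sharing vertex $P_{i+1}$, then they lie on the same side of exactly one partition (the one belonging to the diagonal that misses $P_{i+1}$) and on opposite sides of the other (the diagonal through $P_{i+1}$); hence precisely the diagonal through the shared vertex is crossed. This matches both claims in the statement.

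There is no deep obstacle here; the only care is in the degenerate configurations, so the hard part, such as it is, will be ruling out boundary cases cleanly. A trajectory collinear with a diagonal or passing through a diagonal endpoint is excluded by the vertex-stopping convention. For an interior chord passing through $O$, I would note that a straight chord through $O$ enters one of the four triangles into which the diagonals cut the quadrilateral and continues through $O$ into the triangle directly opposite, so it necessarily joins a pair of opposite edges and meets both diagonals at $O$; this is consistent with the non-adjacent case and never arises in the adjacent case. Convexity is used throughout to guarantee that $O$ is interior and that each diagonal genuinely separates the remaining edges into the stated pairs.
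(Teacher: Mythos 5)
Your proof is correct: the separation argument (each diagonal's line splits the quadrilateral into two triangles, with edge interiors strictly on one side, so a chord crosses a diagonal exactly when its endpoints lie on opposite sides) cleanly establishes both claims, and you handle the degenerate configurations appropriately. The paper's own proof is literally the single word ``Trivial,'' so your write-up simply supplies the standard details the author deemed immediate; there is no divergence in approach to report.
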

\nopagebreak
\begin{proof}
Trivial.
\end{proof}

\begin{figure}[!h] 
\centering
\includegraphics[width=210pt]{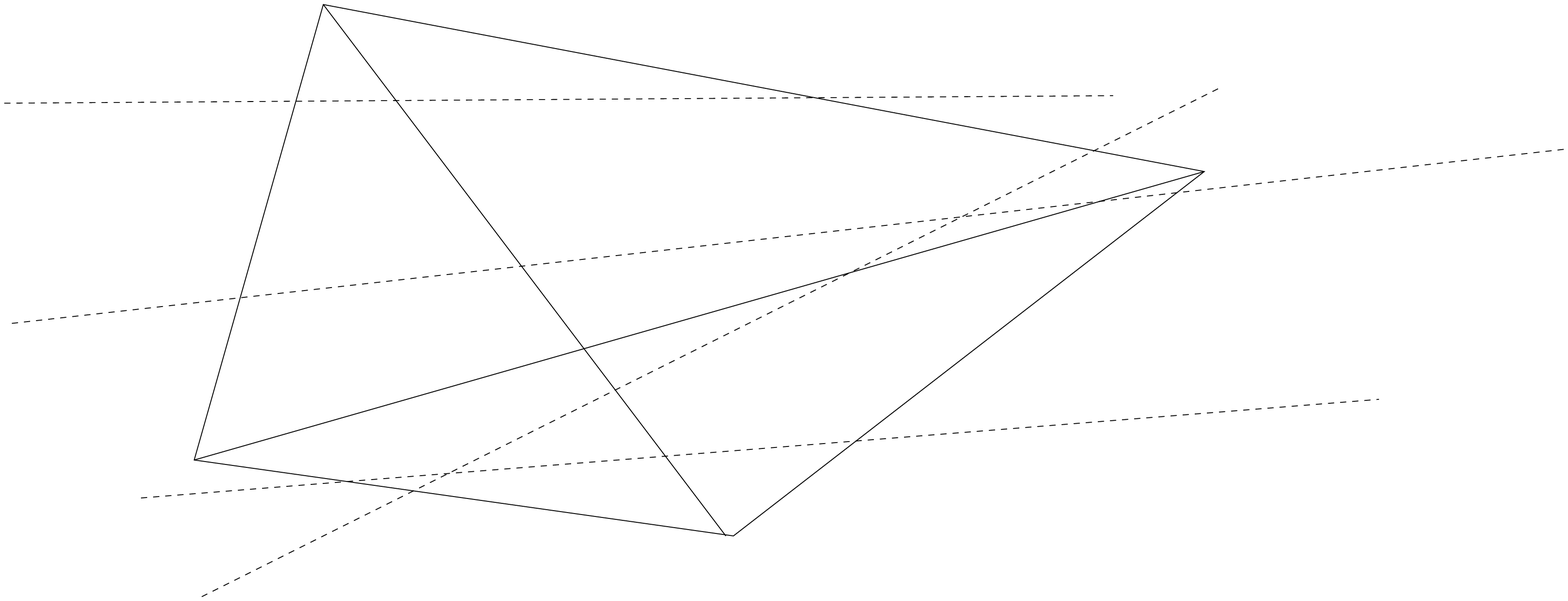} 
\includegraphics[width=210pt]{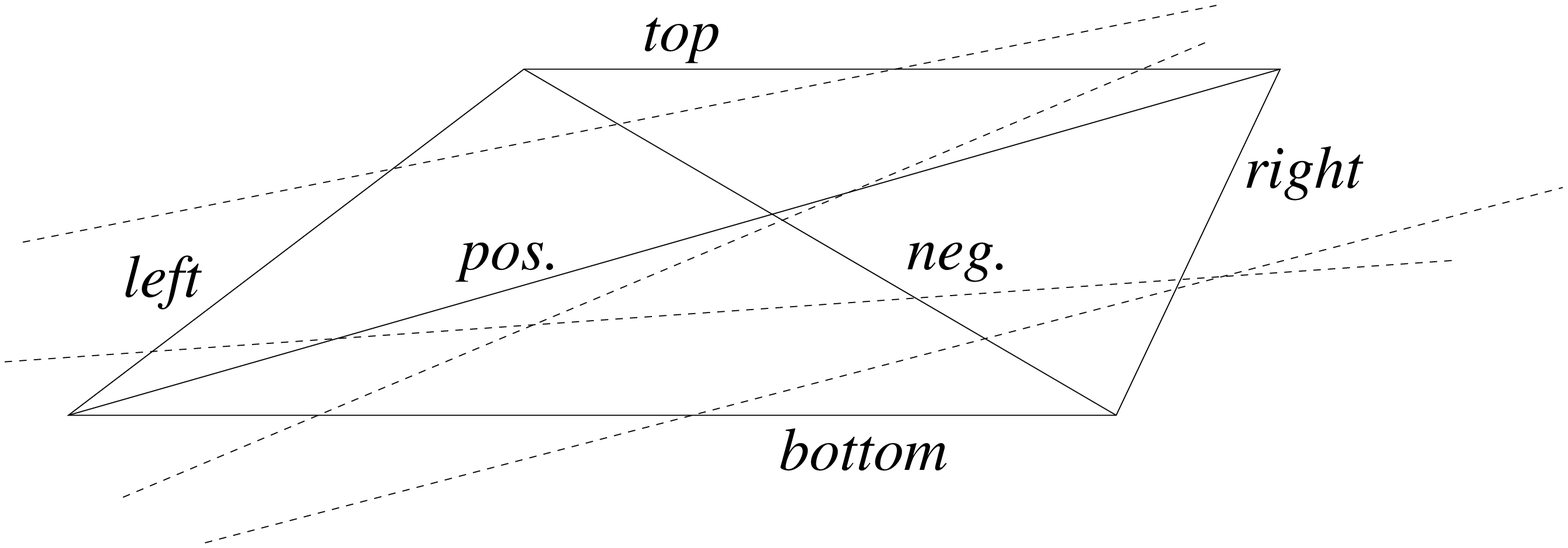} 
\begin{quote}\caption{(a) A convex quadrilateral, and (b) a trapezoid, with trajectories intersecting their diagonals \label{quadtrap}} \end{quote}
\end{figure}


\begin{corollary} \label{zerocor} 
Let the the \emph{top, left, bottom} and \emph{right} edges be as in Figure \ref{quadtrap} (b), and define the \emph{positive} and  \emph{negative} diagonals according to whether their slope is  positive or negative when the vertices of the trapezoid are horizontally shifted to transform the trapezoid into a rectangle. Then on a trapezoid, a trajectory passing from the left edge to the top edge, the left edge to the right edge, the bottom edge to the top edge, or the bottom edge to the right edge crosses the negative diagonal. Of these, only trajectories passing from the left edge to the right edge, or from the bottom edge to the top edge, cross the positive diagonal.
\end{corollary}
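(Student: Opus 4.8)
The plan is to reduce everything to Lemma \ref{zero} by identifying the two diagonals of the trapezoid combinatorially. First I would fix labels for the four vertices of the trapezoid in Figure \ref{quadtrap} (b): call them $BL, BR, TR, TL$ (bottom-left, bottom-right, top-right, top-left), so that the left edge is $BL\,TL$, the top edge is $TL\,TR$, the right edge is $TR\,BR$, and the bottom edge is $BR\,BL$. The two diagonals are then $BL\,TR$ and $TL\,BR$. Since horizontally shifting the vertices to straighten the trapezoid into a rectangle preserves which vertex joins which, the diagonal rising from the bottom-left to the top-right corner of that rectangle is $BL\,TR$, so this is the \emph{positive} diagonal, and $TL\,BR$ is the \emph{negative} diagonal.

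With this labeling, each of the four listed crossing patterns is a pair of edges, and I would simply check adjacency and apply Lemma \ref{zero} to each. The left-to-right pattern crosses $BL\,TL$ and $TR\,BR$, which share no vertex, and the bottom-to-top pattern crosses $BR\,BL$ and $TL\,TR$, which likewise share no vertex; in both of these non-adjacent cases Lemma \ref{zero} says the trajectory crosses \emph{both} diagonals. The left-to-top pattern crosses $BL\,TL$ and $TL\,TR$, adjacent at $TL$, and the bottom-to-right pattern crosses $BR\,BL$ and $TR\,BR$, adjacent at $BR$; in each adjacent case Lemma \ref{zero} says that exactly the diagonal through the shared vertex is crossed.

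The crux, and the only thing that requires a moment's care, is to observe that the shared vertices $TL$ and $BR$ of the two adjacent cases both lie on the \emph{negative} diagonal $TL\,BR$, and not on the positive diagonal $BL\,TR$. Granting this, the adjacent cases cross the negative diagonal but miss the positive one. Combining the two families, all four patterns cross the negative diagonal, which gives the first assertion, while only the two non-adjacent patterns (left-to-right and bottom-to-top) cross the positive diagonal, which gives the second. I do not expect a genuine obstacle here: the argument is pure case-bookkeeping once the correspondence between the named edges, the named diagonals, and the two-case split of Lemma \ref{zero} is set up, and the figure makes the vertex incidences transparent.
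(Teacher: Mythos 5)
Your proof is correct and is essentially the argument the paper intends: the paper states Corollary \ref{zerocor} with no written proof, treating it as an immediate application of Lemma \ref{zero}, and your case-by-case bookkeeping (non-adjacent pairs cross both diagonals; adjacent pairs cross only the diagonal through the shared vertex, and both shared vertices $TL$ and $BR$ lie on the negative diagonal) is exactly that application, made explicit and correctly handling the horizontal-shift identification of the positive diagonal.
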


The key question, to determine the derived sequence from the original cutting sequence, is whether a trajectory that crosses edges $E_1 E_2 E_3$ crosses $E_2' = \vm(E_2)$. Lemmas \ref{hitisgood} and \ref{goodishit} establish that this happens if and only if the word $E_1 E_2 E_3$ is in a $(00)$ or $(11)$ case. Each Lemma proves one direction.

\begin{lemma} \label{hitisgood}
Assume that $E_2$ is the gluing edge for a typical cylinder. Consider the word $E_1 E_2 E_3$ and the corresponding portion of the trajectory that lies between the intersections with edges $E_1$ and $E_3$. Consider the shorter segment $s$ that is the intersection of that segment with the slanted trapezoid that has $E_2$ as a negative diagonal. If $s$ crosses $E_2'$, then $E_1 E_2 E_3$ is a $(00)$ or $(11)$ word.
\end{lemma}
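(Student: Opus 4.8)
The plan is to reduce the statement to the topological dichotomy of Corollary \ref{zerocor}, applied inside the slanted trapezoid (Definition \ref{st}) that has $E_2$ as its negative diagonal; call it $T$. Its two slanted edges are its left and right sides, and pieces of the top and bottom of the cylinder are its top and bottom sides. Since $E_2$ is the middle letter of the word, the trajectory crosses $E_2$, so the segment $s = (\text{trajectory}) \cap T$ is a chord of $T$ meeting the negative diagonal. Because $\theta \in [0,\ts)$, the trajectory is nearly horizontal and rightward, so $s$ enters $T$ through the left or bottom side and leaves through the right or top side --- exactly the hypotheses of Corollary \ref{zerocor}. Thus $s$ crosses the positive diagonal of $T$ if and only if $s$ runs left-to-right or bottom-to-top.

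The first thing I would establish is the geometric input that identifies $E_2'$ with the positive diagonal of $T$. Since $E_2$ has negative slope and $\vm$ is orientation-reversing (it contains the flip $R$), the image $E_2'$ has positive slope; the universal-cover computation of $\mathsection\ref{arcs}$ (Figure \ref{ucshear}), which uses perfectness and the common modulus $M$, shows that the full-modulus shear carries $E_2$ exactly onto the diagonal of $T$ joining its lower-left corner to its upper-right corner. Hence $E_2'$ is the positive diagonal, and the previous paragraph says: $s$ crosses $E_2'$ iff $s$ is left-to-right or bottom-to-top in $T$.

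It then remains to convert these two admissible entry/exit patterns into the type of the word $E_1 E_2 E_3$. Here I would use two facts about the canonical decomposition: crossing a slanted edge keeps the trajectory in the same cylinder (slanted edges lie in a cylinder interior), whereas crossing the top or bottom of $T$ moves it to the adjacent cylinder (Lemma \ref{hcdexists}). Since $E_2$ and $T$ lie in one cylinder, say the $k$-th, and writing $t_1, t_2$ for the types of the first and second polygon crossings, the first crossing is type $(0)$ precisely when $E_1$ lies in the same cylinder and type $(1)$ when $E_1$ lies one cylinder below, and symmetrically for $E_3$ and the second crossing. Therefore entering through the left slanted edge forces $t_1 = 0$ and entering through the bottom forces $t_1 = 1$; likewise leaving through the right slanted edge gives $t_2 = 0$ and leaving through the top gives $t_2 = 1$. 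Consequently left-to-right is a $(00)$ word and bottom-to-top is an $(11)$ word, so if $s$ crosses $E_2'$ the word is $(00)$ or $(11)$, as claimed.

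The main obstacle is this last translation, specifically the level bookkeeping at the horizontal sides of $T$. One must treat with care the boundary situation in which the bottom (or top) side of $T$ is itself a genuine horizontal edge, hence a recorded letter: because $E_1, E_2, E_3$ are consecutive, such a recorded horizontal edge is forced to be $E_1$ (or $E_3$), and one must confirm that the level convention of Definition \ref{zeroone} still assigns it to the lower (resp.\ higher) cylinder, so that it is counted as type $(1)$. Verifying that $E_2'$ is precisely the positive diagonal --- and not merely some positive-slope arc meeting $T$ --- is the other point requiring the explicit modulus geometry; granting both, the conclusion drops out of Corollary \ref{zerocor}.
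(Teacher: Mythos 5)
Your proposal is correct and follows essentially the same route as the paper: both reduce the statement to Corollary \ref{zerocor} applied in the slanted trapezoid, identify $E_2'$ with that trapezoid's positive diagonal via the universal-cover picture of $\mathsection$\ref{arcs}, and conclude that $s$ crosses $E_2'$ only in the left-to-right and bottom-to-top patterns. The one cosmetic difference is the final translation step, where the paper uses the angle restriction $\theta \in [0,\ts)$ to name the actual neighboring gluing edges (the words $ABA$ and $LBK$ in Figure \ref{hit}) while you read off the transition types from same-cylinder versus adjacent-cylinder bookkeeping; your explicit attention to the boundary case where a horizontal side of the trapezoid is itself a recorded letter is, if anything, more careful than the paper's own treatment.
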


\begin{proof}
Refer to  Figure \ref{hit}. The hypothesis states that $s$ crosses the shaded trapezoid's positive diagonal $B'$. By Corollary \ref{zerocor}, there are two ways for this to occur: Either $s$ passes from the left edge $C$ to the right edge $D$, or from the bottom edge $G$ to the top edge $F$. From this, we wish to recover the information about which of the gluing edges, which are the bold edges $I,K,A,B,L,N$, are hit by the extension of $s$ to a trajectory. This will determine which case the word is in.

\begin{figure}[!h] 
\centering
\includegraphics[width=180pt]{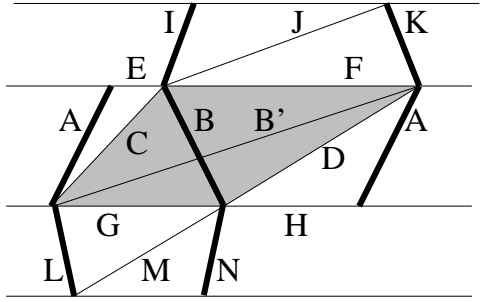} 
\begin{quote}\caption{ A typical cylinder with a slanted trapezoid shaded, and parts of two adjacent cylinders    \label{hit}} \end{quote}
\end{figure}
Suppose that $s$ passes from the left edge to the right edge of the shaded trapezoid, so from edge $C$ to $D$.  The angle restriction  $\theta \in [0,\ts)$ requires that the trajectory is less steep than all of the slanted edges. So a trajectory intersecting edge $C$ must hit edge $A$ just before it hits $C$, because the angle restriction prevents its coming from edge $L$. By Corollary \ref{zerocor}, a trajectory intersecting edge $C$ and then edge $D$ must intersect edge $B$ in between. Similarly, because of the angle restriction, a trajectory intersecting edge $D$ must then intersect edge $A$, because it is not steep enough to intersect edge $K$. So a trajectory passing from edge $C$ to $D$ must cut through gluing edges $ABA$. This is a $(00)$ case.

Suppose that $s$ passes from the bottom edge to the top edge of the shaded trapezoid, so from edge $G$ to edge $F$. Because the trajectory must be less steep than slanted edge $M$, it must intersect $L$ just before edge $G$. By Corollary \ref{zerocor}, between its intersections with edges $G$ and $F$, it must pass through edge $B$. Again, because the trajectory must be less steep than the slanted edge $J$, it must intersect $K$ just after $F$. So a trajectory passing from edge $G$ to $F$ must cut through gluing edges $LBK$. This is a $(11)$ case.
\end{proof}

\begin{lemma} \label{goodishit}
Assume that $E_2$ is the gluing edge for a typical cylinder. Consider the word $E_1 E_2 E_3$ and the corresponding portion of the trajectory that lies between the intersections with edges $E_1$ and $E_3$. Consider the shorter segment $s$ that is the intersection of that segment with the slanted trapezoid that has $E_2$ as a negative diagonal. If $E_1 E_2 E_3$ is a $(00)$ or $(11)$ word, then $s$ crosses $E_2'$.
\end{lemma}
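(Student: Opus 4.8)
The plan is to mirror Lemma \ref{hitisgood}, again using Corollary \ref{zerocor} and the angle restriction $\theta\in[0,\ts)$; it is cleanest to prove the contrapositive. The one fact common to every case is that $s$ crosses the negative diagonal $E_2$: indeed the trajectory crosses $E_2$ strictly between $E_1$ and $E_3$, and $E_2$ is the negative diagonal of the slanted trapezoid $CFDG$ defining $s$. By Corollary \ref{zerocor}, a segment crossing the negative diagonal enters and leaves $CFDG$ through one of exactly four ordered pairs of edges: $C\to F$, $C\to D$, $G\to F$, or $G\to D$ (left-to-top, left-to-right, bottom-to-top, bottom-to-right); and only $C\to D$ and $G\to F$ additionally cross the positive diagonal $E_2'$.

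So it suffices to show the contrapositive: if $s$ does not cross $E_2'$, then $E_1E_2E_3$ is a $(01)$ or $(10)$ word. By the above, failing to cross $E_2'$ forces $s$ to realize $C\to F$ or $G\to D$. For each of these I would reconstruct the gluing edges hit by the full trajectory exactly as in Lemma \ref{hitisgood}, using that the trajectory is less steep than every slanted edge to pin down its behavior just before entering and just after leaving $CFDG$. The new feature relative to \ref{hitisgood} is that $s$ now meets exactly one of the horizontal edges $F,G$, and meeting a horizontal edge corresponds to a change of level; hence exactly one of the two polygon crossings recorded by $E_1E_2E_3$ changes level while the other does not, so the word is mixed. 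Tracking which crossing contains the level change shows that $C\to F$ gives one of $(01),(10)$ and $G\to D$ gives the other.

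The contrapositive then yields the lemma at once: if $E_1E_2E_3$ is a $(00)$ or $(11)$ word, then $s$ cannot be $C\to F$ or $G\to D$, so it is $C\to D$ or $G\to F$, each of which crosses $E_2'$ by Corollary \ref{zerocor}. Together with Lemma \ref{hitisgood} this sets up a bijection between the four traversals of the slanted trapezoid and the four word types, in which crossing $E_2'$ corresponds precisely to the $(00)$ and $(11)$ words.

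I expect the main obstacle to be the bookkeeping in the two previously unexamined traversals $C\to F$ and $G\to D$: using the bound $\theta<\ts$ to exclude the steeper transitions, I must identify exactly which gluing edges of the current and neighboring cylinders (the bold edges of Figure \ref{hit}) are crossed immediately before and after $s$, and then confirm that the single level change produced by crossing $F$ or $G$ lies in exactly one of the two polygon crossings---so that the resulting words are genuinely the mixed types $(01)$ and $(10)$, and not $(00)$ or $(11)$.
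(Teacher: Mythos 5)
Your proposal is correct, but it takes a genuinely different route from the paper. The paper proves the implication directly: if $E_1E_2E_3$ is a $(00)$ word, the trajectory passes $A \to B \to A$ in Figure~\ref{hit}, and in so doing $s$ crosses $C$ and then $D$ (left edge to right edge of the slanted trapezoid), so Corollary~\ref{zerocor} immediately gives the crossing of the positive diagonal $E_2'$; the $(11)$ case is the symmetric traversal $G \to F$ via $L \to B \to K$. You instead prove the contrapositive, which obliges you to analyze the two traversals the paper never examines, $C \to F$ and $G \to D$, and to show they produce the mixed words: that bookkeeping does go through ($C \to F$ forces $A$ just before $C$ by the angle restriction, and $K$ just after $F$ since the trajectory is less steep than the slanted edge above, giving the $(01)$ word $ABK$; symmetrically $G \to D$ gives the $(10)$ word $LBA$), and your plan for it is sound, provided you also invoke that the four word types of Definition~\ref{zeroone} are mutually exclusive and exhaustive, so that ``$(01)$ or $(10)$'' really negates ``$(00)$ or $(11)$.'' The trade-off: the paper's direct argument is shorter and recycles only the reasoning already present in Lemma~\ref{hitisgood}, while your version buys the complete four-way correspondence between traversals of the slanted trapezoid and word types, which makes the biconditional of Lemma~\ref{traptrap} immediate rather than assembled from two one-directional lemmas. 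One point you should make explicit: the exhaustiveness of the four ordered pairs is not literally what Corollary~\ref{zerocor} asserts --- it only says which traversals cross which diagonal. What rules out the other entries and exits is the restriction $\theta \in [0,\ts)$: a trajectory of nonnegative slope strictly less steep than the slanted edges can enter the slanted trapezoid only through $C$ or $G$ and leave only through $D$ or $F$. With that sentence added, your contrapositive argument is complete.
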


\begin{proof}
Suppose that $E_1 E_2 E_3$ is a  $(00)$ word. Referring to Figure \ref{hit}, this means that the corresponding trajectory passes from edge $A$ to $B$ to $A$. In so doing, $s$  crosses edge $C$ and then $D$. This means that $s$ goes from the left edge of the slanted trapezoid to the right edge, so by Corollary \ref{zerocor}, $s$ intersects the positive diagonal $B'$ of the slanted trapezoid, which is $E_2'$ as desired.

Now suppose that $E_1 E_2 E_3$ is a $(11)$ word. In Figure \ref{hit}, this means that the corresponding trajectory passes from edge $L$ to $B$ to $K$. In so doing, $s$  crosses edge $G$ and then $F$. This means it goes from the bottom edge of the slanted trapezoid to the top edge, so by Corollary \ref{zerocor}, $s$ intersects the positive diagonal $B'$ of the slanted trapezoid, which is $E_2'$ as desired.
\end{proof}

Most of the cylinders in the surfaces we discuss in this paper are the union of two trapezoids, covered in Lemma \ref{traptrap}. Sometimes, as on each end of a \bm surface, one of the trapezoids is degenerate and is a triangle, and other times, as in a double $(2n\!+\!1)$-gon, both of the trapezoids in the cylinder decomposition degenerate to triangles. We consider these to be special cases of two trapezoids, so we cover these as part of Lemma \ref{traptrap}. Sometimes, as in a regular $4n$-gon, the middle cylinder is a rectangle, but a double cover (two rectangles) is two trapezoids, so we cover this in Lemma \ref{traptrap} as well.  Finally, horizontal edges are not the gluing edges for any cylinder, so we consider them in Lemma \ref{horiz}.

\begin{lemma} \label{traptrap}
Let $A$ be a gluing edge of a typical cylinder of a perfect translation surface of common modulus $M$. Consider a trajectory with $\theta \in [0,\ts)$ that crosses $A$, and let $s$ be the segment that is the intersection of the trajectory with the slanted trapezoid that has $A$ as its negative diagonal.  Then $s$ crosses $A' = V_M (A)$  if and only if the corresponding $A$ in the cutting sequence occurs as the middle letter of a $(11)$ or $(00)$ word. 
\end{lemma}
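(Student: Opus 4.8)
The plan is to observe that the biconditional asserted here is exactly the conjunction of the two implications already established in Lemmas \ref{hitisgood} and \ref{goodishit}, applied with $E_2 = A$. Lemma \ref{hitisgood} supplies the ``only if'' direction: if the segment $s$ lying in the slanted trapezoid that has $A$ as its negative diagonal crosses $A' = \vm(A)$, then the surrounding three-letter word $E_1 A E_3$ is a $(00)$ or $(11)$ word. Lemma \ref{goodishit} supplies the converse. Thus for a generic typical cylinder decomposed into two honest trapezoids, the lemma follows immediately by citing these two results; all of the combinatorial content has already been extracted from the topological Corollary \ref{zerocor}, so no new geometric work is required in this case.

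What remains is to absorb the degenerate and exceptional configurations as special cases of the two-trapezoid picture, as announced in the discussion preceding the statement. For a degenerate cylinder, in which one or both trapezoids collapse to a triangle (or to a slanted edge), I would argue that the proofs of Lemmas \ref{hitisgood} and \ref{goodishit} go through verbatim: the slanted trapezoid still possesses a well-defined positive diagonal $A'$, Corollary \ref{zerocor} still classifies the left-edge-to-right-edge versus bottom-edge-to-top-edge crossings of $s$, and the angle restriction $\theta \in [0,\ts)$ still forces a trajectory meeting a slanted edge to have arrived from (or to continue to) the adjacent gluing edge rather than a steeper one. Since none of these ingredients requires the trapezoids to be nondegenerate, the degenerate situation is a genuine limiting case of the typical argument and needs no separate treatment beyond noting that the relevant edges and diagonals persist.

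For an exceptional cylinder, which carries a single vertical gluing edge $A$ and is decomposed into one rectangle, I would pass to the double cover described in $\mathsection$\ref{arcs}: two copies of the rectangle forming a cylinder of modulus $M$ decomposed into two (rectangular) trapezoids. This double cover is a typical cylinder in the broad sense, so the argument of the previous paragraph applies to it directly, and both the crossing relation ``$s$ crosses $A'$'' and the $(00)$/$(11)$ classification of the word descend to the exceptional cylinder. I expect the principal bookkeeping obstacle to lie precisely here: one must verify that the designation of a $(00)$ or $(11)$ word, and the identification of the slanted trapezoid having $A$ as negative diagonal, are compatible with the covering map, so that a word on the surface lifts to a word of the same type on the cover and the analyzed segment $s$ lifts correctly. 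Once this compatibility is checked, the typical, degenerate, and exceptional cases are unified and the biconditional holds in all of them.
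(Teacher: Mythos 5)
Your proposal follows essentially the same route as the paper, whose proof opens with exactly your observation --- ``This is a direct corollary of Lemmas \ref{hitisgood} and \ref{goodishit}'' --- and then, as you propose, handles the degenerate cylinders as limiting cases of the two-trapezoid picture and the exceptional cylinder by passing to the relabeled double cover. The one place the paper is more careful than your ``goes through verbatim'' claim is the fully degenerate case (both trapezoids triangles), where $A'$ coincides with $A$ and the $(11)$ case is impossible, so Corollary \ref{zerocor}'s trapezoid dichotomy no longer literally applies and the paper instead verifies the biconditional by direct inspection (the only word through $A$ is the $(00)$ word $BAB$, and crossing $A'$ is automatic); this is a presentational refinement rather than a gap, since the conclusion holds trivially there.
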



\begin{proof} 
This is a direct corollary of Lemmas \ref{hitisgood} and \ref{goodishit}. 

Because this is the key result of the paper, we will draw several pictures to carefully work through why this is true.

There are three cases of a typical cylinders: A cylinder that is the union of two isosceles trapezoids, and then the two limiting cases, where one of the trapezoids is an isosceles triangle, or where both trapezoids are isosceles triangles. We will treat the general case first, and then give further arguments for the limiting cases.



\begin{figure}[!h] 
\centering
\includegraphics[width=300pt]{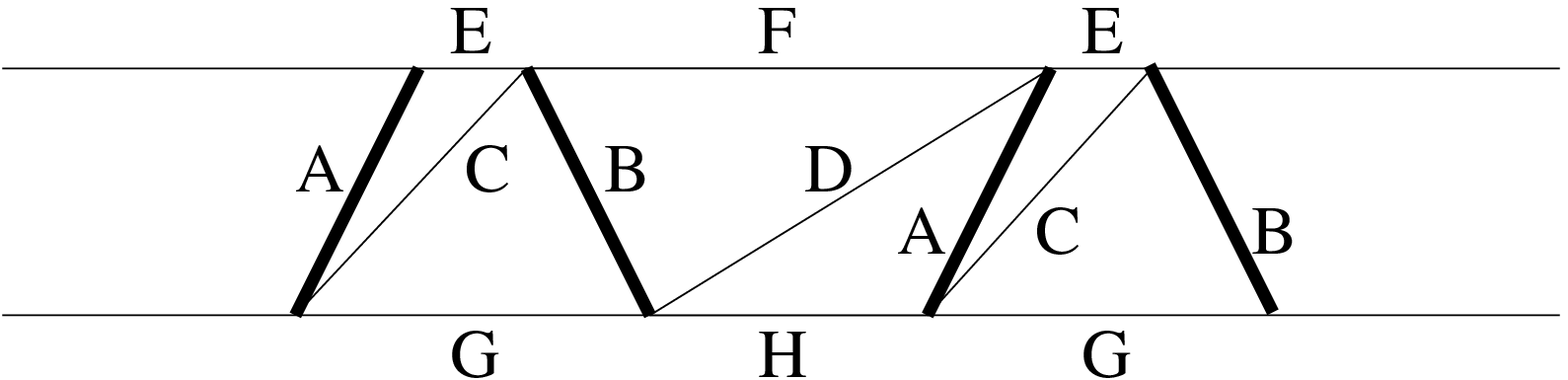} 
\includegraphics[width=300pt]{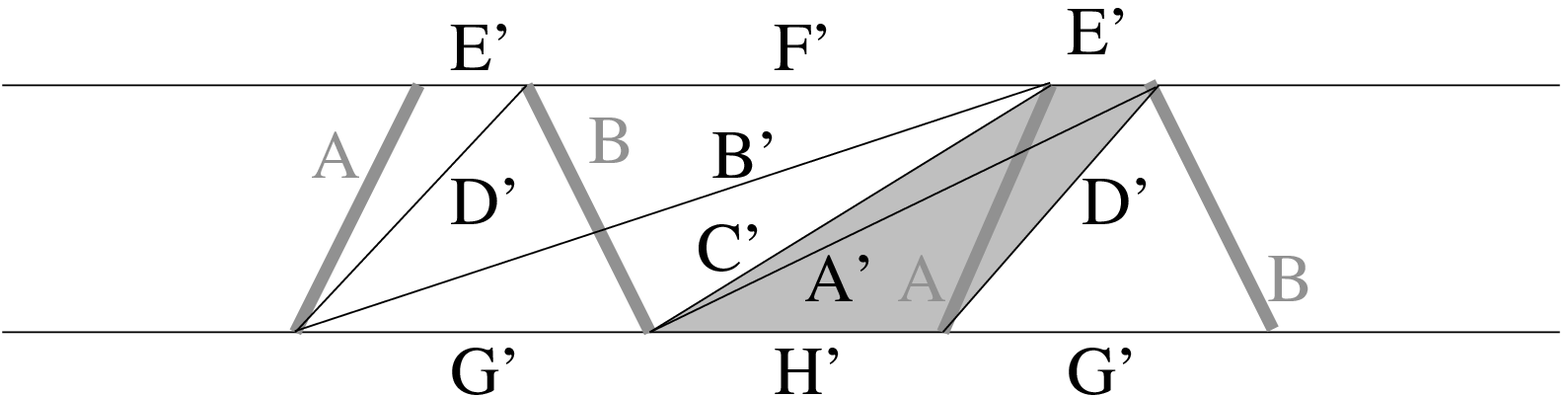} 
\begin{quote}\caption{ A typical cylinder, and the result of applying $V_M$, depicted in the universal cover. The slanted trapezoid with $A,A'$ as diagonals is shaded   \label{uca}} \end{quote}
\vspace{-2em}
\end{figure}

Consider a cylinder $\mathscr{C}$ of modulus $M$ that is the union of two isosceles trapezoids glued along edges $A$ and $B$. The top half of Figure \ref{uca} shows a portion of the universal cover of the cylinder $\mathscr{C}$. This is a copy of Figure \ref{unicov}. The bottom half of Figure \ref{uca} shows the action of $V_M$ on $\mathscr{C}$, again depicted in the universal cover. This is a copy of Figure \ref{ucshear} (b), except that we have shaded a different trapezoid.

In a perfect translation surface, each cylinder is partitioned into two isosceles trapezoids (in a typical case) or one rectangle (in an exceptional case), divided by the gluing edges. Now, we use a different partition, into slanted trapezoids, where the two trapezoids are divided by the slanted edges.

Consider a trajectory with $\theta \in [0,\ts )$ crossing the shaded slanted trapezoid in Figure \ref{uca}. By Corollary \ref{zerocor}, every such trajectory crosses $A$. By Lemmas $\ref{hitisgood}-\ref{goodishit}$, the trajectory crosses $A'$ if and only if it is a $(00)$ or $(11)$ case.

The same analysis applies to the other slanted trapezoid in the cylinder (shaded in Figure \ref{ucb}): Every   trajectory with $\theta \in [0,\ts )$ that crosses the shaded slanted trapezoid crosses $B$, but only those in the $(00)$ or $(11)$ cases cross $B'$.

\begin{figure}[!h] 
\centering
\includegraphics[width=300pt]{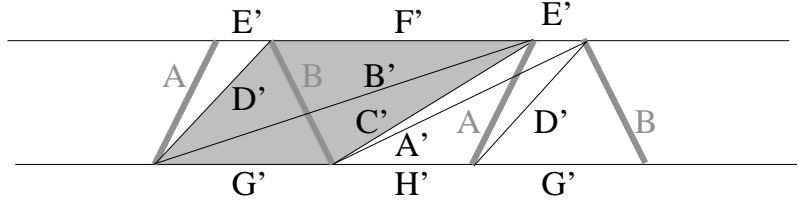} 
\begin{quote}\caption{   A typical cylinder, and the result of applying $V_M$, depicted in the universal cover. The slanted trapezoid with $B,B'$ as diagonals is shaded  \label{ucb}} \end{quote}
\end{figure}


To spell things out in detail, we will discuss the limiting cases where one or two of the trapezoids are triangles. 

If one of the trapezoids degenerates to a triangle, this means that edge $E'$ (without loss of generality) is contracted to a point, and edge $D'$ is not needed.  
Consider edge A.
The top edge has been contracted to a point, so only the cases $(00)$ and $(10)$ are possible. A trajectory in the $(10)$ case crosses edge $H'$ and then edge $A$. Such a trajectory clearly cannot cross edge $A'$. A trajectory in the $(00)$ case crosses $BAB$, so by Corollary \ref{zerocor}, it crosses $A'$. So indeed, the trajectory crosses $A'$ if and only if it is in a $(00)$ or $(11)$ case, because the $(11)$ case is not possible. For edge $B$,
the analysis is the same as in the two-trapezoid case.

If both of the trapezoids degenerate to triangles, this means that both edges $E'$ and $H'$ contract to a point. Edges $A,A',C',D'$ all coincide, so  edges $C'$ and $D'$ are  not needed and edge $A'$ coincides with edge $A$.
Thus, trivially, a trajectory crosses $A'$ if and only if it crosses $A$. The only way to cross edge $A$ is in the $(00)$ word $BAB$, so again, the trajectory crosses $A'$ if and only if it is in a $(00)$ or $(11)$ case.
For edge $B$ (looking at the slanted trapezoid whose diagonals are $B$ and $B'$), the analysis is the same as in the two-trapezoid case.

Finally, we will consider an exceptional cylinder. We take a double cover of the exceptional cylinder, so in Figure \ref{ucb} this means that gluing edges $A$ and $B$ have the same label (say, $A$) and are vertical. Similarly, each pair of edge labels $C$ and $D$,  $E$ and $F$, and $G$ and $H$ is given the first label ($C,E,G$). We consider the slanted trapezoid $CE'CG'$, with positive diagonal $A'$ and negative diagonal $A$, and then the analysis is the same as in the typical case.

For an expanded discussion of the degenerate and exceptional cases, see (\cite{thesis}, Chapter 3 and Lemma 4.8).

\end{proof}

\newpage
\begin{lemma} \label{horiz}
A trajectory crosses a horizontal edge $A$ if and only if it crosses its image $A' =V_M (A)$.
\end{lemma}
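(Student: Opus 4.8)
The plan is to show that the image $A' = V_M(A)$ is simply the edge $A$ itself, after which the asserted equivalence is immediate. Since $A$ is a \emph{horizontal} edge, it is the top or bottom boundary of some cylinder (possibly serving as the top of one cylinder and the bottom of its neighbor), and I would analyze the two factors of $V_M = S_M \circ R$ separately.

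First I would recall from Lemma \ref{shearexists} that the shear $S_M$ fixes the tops and bottoms of the cylinders \emph{pointwise}; in particular $S_M$ restricts to the identity on $A$, so $S_M(A) = A$. Next, the flip $R$ has derivative $\ttm{-1}001$, which preserves the vertical direction and merely reflects horizontally. Consequently $R$ carries each horizontal edge to a horizontal edge at the same height, and in fact maps each top-or-bottom boundary of each trapezoid to itself as a set — precisely the observation recorded in the discussion around Figure \ref{ucflip}, where the horizontal labels $E,F,G,H$ are preserved by $R$. Thus $R(A) = A$ as a set.

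Combining these, $A' = V_M(A) = S_M(R(A)) = S_M(A) = A$ as a subset of the surface (and the same conclusion holds under the opposite order of composition, since each factor preserves $A$). I would emphasize that $V_M$ need not fix $A$ \emph{pointwise} — the shear twists and the flip reflects — but it preserves $A$ \emph{setwise}, which is all that the crossing question requires. Since $A'$ and $A$ are the same arc on the surface, a trajectory meets $A$ exactly when it meets $A'$, giving the lemma.

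The only point requiring care is the distinction between setwise and pointwise invariance; I expect no genuine obstacle here. It is worth stating explicitly that horizontal edges, unlike the gluing edges treated in Lemma \ref{traptrap}, are genuinely fixed as sets by $V_M$, so there is no $(00)/(11)$ combinatorial condition governing their survival — this is exactly why every horizontal letter is kept in the derived sequence, as claimed in the Main Theorem.
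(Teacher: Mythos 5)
Your proof is correct and follows essentially the same route as the paper, whose (much terser) proof observes that $V_M = S_M \circ R$ acts as an isometry on horizontal segments, so that $A = A'$ and the claim is trivial. Your expanded version — $S_M$ fixes $A$ pointwise by Lemma \ref{shearexists}, $R$ preserves $A$ setwise by the vertical symmetry, hence $A' = A$ as a set — is just a more careful spelling-out of that argument, with the setwise/pointwise distinction rightly emphasized.
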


\begin{proof}
The horizontal shear $S_M$ is an isometry on horizontal segments, and $R$ is a reflection, so $V_M = S_M \circ R$ is an isometry on horizontal segments. This means that $A=A'$, so the result is trivial.
\end{proof}

\begin{lemma} \label{inorder}
The derived sequence consists of the edge labels of the ``primed'' edges crossed by the trajectory, in the same order.
\end{lemma}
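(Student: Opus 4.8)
The plan is to establish that the \emph{order} in which the derived trajectory $\vm\tau$ crosses its edges matches the order in which $\tau$ crosses the corresponding primed edges. By Lemma \ref{lookatshears}, the derived sequence $c(\vm\tau)$ equals $c'(\tau)$, the sequence of primed (sheared) edges crossed by the \emph{original} trajectory $\tau$. So the content of this lemma is really about reading off $c'(\tau)$ correctly: the primed edges crossed by $\tau$, listed in the order $\tau$ encounters them, form exactly the derived sequence.

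First I would recall that a cutting sequence is by definition (Definition \ref{cs}) the list of edges crossed \emph{in the temporal order of the trajectory}. The preceding Lemmas \ref{traptrap} and \ref{horiz} tell us, for each individual edge $A$ crossed by $\tau$, \emph{whether} its image $A'=\vm(A)$ is crossed, but they do not yet assemble these local facts into a global sequence. The key observation is that each primed edge $A'$ that gets crossed is crossed by the same local segment $s$ of the trajectory that crosses $A$: in Lemma \ref{traptrap}, $s$ is the intersection of $\tau$ with the slanted trapezoid having $A$ as its negative diagonal and $A'$ as its positive diagonal, and by Corollary \ref{zerocor} any such crossing segment meets $A$ and $A'$ within that single trapezoid. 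Hence the crossing of $A'$ occurs at essentially the same position along $\tau$ as the crossing of $A$.

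The main step is therefore to argue that the temporal order of the primed crossings agrees with the temporal order of the unprimed crossings that spawn them. I would do this by using the partition of the perfect translation surface into slanted trapezoids (Definition \ref{st}): the trajectory $\tau$ passes through a well-defined sequence of slanted trapezoids, and within each one it crosses its negative diagonal $A$ and (exactly when $A$ survives) its positive diagonal $A'$, with $A$ and $A'$ being encountered in a fixed local order inside that trapezoid. Since the slanted trapezoids are traversed in order along $\tau$, and within each trapezoid there is at most one marked primed crossing, concatenating these local pieces yields the primed crossings in exactly the same order as the corresponding marked unprimed edges. For horizontal edges (Lemma \ref{horiz}), $A=A'$, so the primed crossing is literally the unprimed crossing and its position in the order is unambiguous.

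The main obstacle to watch for is the possibility that a single local segment could contribute more than one primed crossing, or that primed crossings from different trapezoids could interleave out of order with their unprimed counterparts; I would dispatch this by noting that Corollary \ref{zerocor} guarantees at most one positive-diagonal crossing per slanted trapezoid, and that the slanted-trapezoid partition respects the linear order of $\tau$, so no reordering can occur. Combining this with Lemma \ref{lookatshears} then gives $c(\vm\tau)=c'(\tau)=$ the list of primed edges crossed by $\tau$ in order, which is precisely the statement that the derived sequence consists of the primed edge labels in the same order.
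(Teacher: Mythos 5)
Your proof is correct and takes essentially the same approach as the paper's: both reduce the claim to Lemma \ref{lookatshears} and then use the partition of the trajectory into segments lying in successive slanted trapezoids to conclude that the primed crossings inherit the linear order of the unprimed ones. You merely spell out the order-preservation step (at most one positive-diagonal crossing per trapezoid, via Corollary \ref{zerocor}, and $A=A'$ for horizontal edges) that the paper's much terser proof leaves implicit.
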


\begin{proof}
We partitioned the infinite trajectory into countably many finite segments, each of which is the intersection of the trajectory with a slanted trapezoid. Lemma \ref{lookatshears} tells us that the derived sequence consists of the crossed edge labels. These are a subset of the full sequence of edge labels, in the same order, so the order does not change.   
\end{proof}

Now we may prove the theorem:

\begin{proof}[Proof of Theorem \ref{mainthm}]
By Lemma \ref{lookatshears}, determining the derived cutting sequence for a given trajectory is equivalent to determining which sheared edges the trajectory crosses. Lemmas \ref{traptrap}$-$\ref{horiz} prove that the trajectory crosses the sheared edges as stated in the theorem. Lemma \ref{inorder} tells us that their order does not change. This proves the result.
\end{proof}

\section{Regular Polygon Surfaces}
\label{regularpolygons} 

We will prove a corollary of Theorem \ref{mainthm}, that the combinatorial  rule for obtaining the derived sequence from the original cutting sequence for regular polygon surfaces is to keep only the \emph{sandwiched} edge labels:

\begin{definition} \label{sandwiched}
A \emph{sandwiched} label is one that is preceded and followed by the same label, as $A$ is in the sequence $\ldots CBABDAC\ldots$.
\end{definition}

This result was already known for even polygon surfaces \cite{SU} and double odd polygon surfaces \cite{Davis} but this is a new method of proof.

\begin{definition} \label{regdef}
A \emph{regular $n$-gon surface} is a double regular $n$-gon for $n$ odd, and may be a single or double regular $n$-gon for $n$ even. For a single $n$-gon,  identify  opposite parallel edges  to obtain a translation surface (this is only possible when $n$ is even). For a double $n$-gon, take two copies of the regular $n$-gon, one of which is a reflection of the other, and identify opposite parallel edges. 
\end{definition}

John Smillie and Corinna Ulcigrai analyzed the even cases and showed, among many other results for these surfaces, that the combinatorial  rule for obtaining the derived sequence from the original cutting sequence for regular $2n$-gon surfaces is to keep only the sandwiched edge labels (\cite{SU}, Proposition 4.1.1). They used ``transition diagrams,'' which we explore further in Section \ref{td}. We subsequently showed that Smillie and Ulcigrai's transition diagram method extends to double  odd regular polygons as well (\cite{Davis}, Theorem 6.4). Note that Smillie and Ulcigrai proved many results for the regular octagon surface, and for even polygon surfaces in general, and we only extend a few of their results to other surfaces.

The two rules in Theorem \ref{mainthm} are reduced to this one simple rule (``keep only the sandwiched edge labels'') in the case of regular polygon surfaces. The key insight is that because of the surfaces' symmetry, sandwiching occurs exactly in the $(00)$ and $(11)$ cases. 

In  Lemmas \ref{mododd} (double) and \ref{modeven} (single), we prove that regular $n$-gon surfaces are perfect, with common modulus $2\cot\pi/n$, so that we may apply the Main Theorem \ref{mainthm} to them. 
We will need the following trigonometric identities:

\begin{eqnarray}
1+2\cos\theta+\ldots+2\cos k\theta + \cos (k+1) \theta &=& 2\cot (\theta/2) \sin((k+1)\theta); \label{cotone} \\
1+2\cos\theta+\ldots+2\cos k\theta &=& \frac{\sin((k+1/2)\theta)}{\sin (\theta/2)}. \label{dirichlet}
\end{eqnarray}

Equation (\ref{cotone}) is proven in (\cite{Davis}, Lemma 2.1). To prove Equation (\ref{dirichlet}), write the sum as $\sum_{j=-k}^{k} \cos j\theta$ and then rewrite $\cos j\theta = \frac{e^{ij\theta}+e^{-ij\theta}}{2}$ and simplify the expression. For details, see \cite{thesis}.



\begin{lemma} \label{mododd}
A double regular $n$-gon surface is perfect, with common modulus $2\cot \pi/n$.
\end{lemma}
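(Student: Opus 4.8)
The plan is to verify the two defining properties of a perfect translation surface (Definition \ref{pcd}) for the double regular $n$-gon: that it has a special decomposition so the canonical horizontal cylinder decomposition exists, and that every cylinder in that decomposition either has modulus $M = 2\cot\pi/n$ and splits into two isosceles trapezoids, or has modulus $M/2$ and is a single rectangle. Since the surface is built from two reflected copies of a regular $n$-gon with opposite parallel edges identified, I would first orient the polygons so that one edge is horizontal; then each regular $n$-gon is convex, level, and vertically symmetric, hence special, and the reflection pairing of the two copies makes the gluing compatible with the vertical symmetries. This gives the special decomposition and, by Lemma \ref{hcdexists}, the horizontal cylinder decomposition, with each cylinder a union of isosceles trapezoids by Lemma \ref{trappart}.

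The heart of the proof is the modulus computation. I would set up coordinates with the regular $n$-gon having unit-length edges (or circumradius normalized), and label the vertices by their heights. The horizontal chords connecting vertices at equal heights cut the double $n$-gon into horizontal strips; each strip, glued across the two polygons, forms a cylinder. For a given cylinder I need its width $W$ (the horizontal identification length) and its height $H$, and I must show $W/H = 2\cot\pi/n$ independently of which cylinder I pick. The widths $W$ are differences of $x$-coordinates of vertices, which are partial sums of the edge projections $\cos(\text{multiples of }\pi/n)$, and the heights $H$ are differences of the corresponding $y$-coordinates, partial sums of $\sin(\text{multiples of }\pi/n)$; this is exactly where the trigonometric identities \eqref{cotone} and \eqref{dirichlet} are designed to be used. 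The telescoping structure of these sums should collapse the ratio $W/H$ to the constant $2\cot\pi/n = 2\cos(\pi/n)/\sin(\pi/n)$ for every typical cylinder.

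I would then handle the parity-dependent special cylinders. For $n$ odd the double $n$-gon is symmetric in a way that makes all cylinders typical (two trapezoids each, here degenerating appropriately at top and bottom), so only the first bullet of Definition \ref{pcd} arises and the common modulus is uniformly $2\cot\pi/n$. The case that requires care is any cylinder that closes up as a single rectangle rather than two trapezoids — I must check that such a cylinder has modulus exactly $M/2$, consistent with the exceptional case. Here I would use the vertical-symmetry/flip structure: a single-trapezoid cylinder must be a rectangle (as established in the unnumbered lemma following Lemma \ref{flipexists}), and its width is half the generic width while its height matches, yielding modulus $M/2$.

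The main obstacle I anticipate is the bookkeeping of the cylinder widths and heights across the two reflected polygons — in particular, correctly identifying the horizontal identification length $W$ for each cylinder (it involves a vertex on one polygon gluing to a vertex on the reflected copy, not a naive same-polygon difference) and confirming that the index ranges in the partial sums line up so that \eqref{cotone} and \eqref{dirichlet} apply cleanly. Once the coordinates and the gluing map are pinned down precisely, the ratio computation should be a direct, if slightly delicate, application of the stated identities; the conceptual content is entirely in showing the ratio is cylinder-independent.
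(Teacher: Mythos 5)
Your proposal is correct and follows essentially the same route as the paper's proof: partition each polygon into horizontal strips between consecutive vertex levels, note that each cylinder is two congruent isosceles trapezoids (or triangles), one from each polygon, compute the cylinder width as the sum of the chord lengths $1+2(\cos\theta+\cdots+\cos k\theta)$ at levels $k$ and $k+1$ with $\theta = 2\pi/n$, take the height $\sin((k+1)\theta)$, and collapse the ratio to $2\cot(\pi/n)$ via Equation~(\ref{cotone}). One harmless inaccuracy: the exceptional single-rectangle case you budget for never arises on a \emph{double} $n$-gon for any parity of $n$ --- every edge of one polygon is glued to the opposite edge of the \emph{other} polygon, so each cylinder always receives one trapezoid from each copy (even the vertical-edge cylinder when $4$ divides $n$, which is a typical cylinder made of two rectangles of modulus $M$) --- and the rectangle computation together with Equation~(\ref{dirichlet}) belongs to the single-$2n$-gon case of Lemma~\ref{modeven}, so that portion of your plan, and the parity remark suggesting it is needed here, is vacuous and does not affect correctness.
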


\begin{proof}
We will prove that each cylinder of a double regular $n$-gon surface (a) is the union of two isosceles trapezoids or two isosceles triangles, and (b) has modulus $2\cot \pi/n$.

(a) The interior of the polygon that lies between horizontal levels $k$ and $k+1$ is an isosceles trapezoid for $k=0,\ldots,n-2$ and is an isosceles triangle for $k=n-1$ when $n$ is odd. A cylinder consists of gluing together two congruent copies of such pieces, one from each polygon.

(b) Assume that the edges of a regular polygon have length $1$. The exterior angle of the polygon is $\theta = 2\pi/n$. In a regular polygon, the horizontal segment connecting the vertices at level $0$ is $1$ (the horizontal edge), at level $1$ is $1+2\cos\theta$, and at level $k\neq 0$ in general is \mbox{$1+2(\cos\theta+\ldots+\cos(k\theta))$} (see Figure \ref{regwidth}). 

\begin{figure}[!h] 
\centering
\includegraphics[width=380pt]{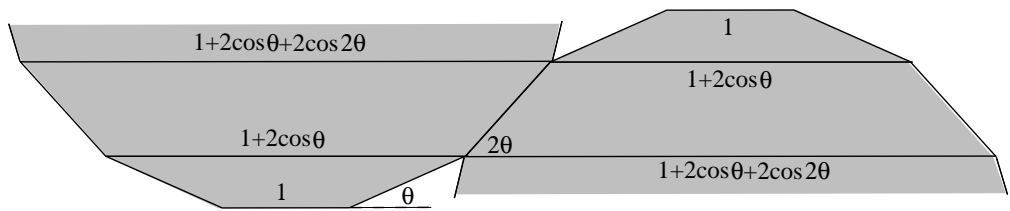} 
\begin{quote}\caption{Two regular polygons, with opposite parallel edges \mbox{identified}  \label{regwidth}} \end{quote}
\end{figure}

When we glue two regular polygons together to make a surface, two copies of the trapezoid or triangle between level $k$ and level $k+1$ are glued together to make the $k$th cylinder. So the width of the $k$th cylinder is 
\begin{eqnarray*}
1+2(\cos\theta+\ldots+\cos(k\theta))\hspace{1em}+\hspace{1em}1+2(\cos\theta+\ldots+\cos((k+1)\theta)) \\
= 2\left(1+2\cos\theta + \ldots + 2\cos k\theta + \cos (k+1)\theta\right)=2\cot(\theta/2) \sin ((k+1)\theta),
\end{eqnarray*}
by applying Equation (\ref{cotone}).

The vertical distance between level $k$ and level $k+1$ is $\sin (k+1)\theta$.

The modulus of the $k$th cylinder is then 
$$\frac{2\cot(\theta/2) \sin (k+1)\theta}{\sin (k+1) \theta} = 2\cot \theta/2 =  2\cot \pi/n.$$\end{proof}

\begin{lemma} \label{modeven}
A regular $2n$-gon surface is perfect, with common modulus $2\cot \pi/n$.
\end{lemma}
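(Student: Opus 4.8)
The plan is to run the width-and-modulus computation of Lemma \ref{mododd} for a single $2n$-gon, the new ingredient being a determination of how the horizontal strips of one polygon glue up into cylinders. Orient the regular $2n$-gon with a horizontal bottom edge and unit sides, and let $\theta = 2\pi/(2n) = \pi/n$ be its exterior angle. The polygon is convex, vertically symmetric and level (its $n$ vertex-heights $h_0 = 0 < h_1 < \cdots < h_{n-1}$ are distinct, each occurring at exactly one left and one right vertex), so the surface is special and Lemmas \ref{hcdexists} and \ref{trappart} apply: the horizontal lines through the vertices cut the polygon into $n-1$ isosceles trapezoids, the $k$-th lying between levels $k$ and $k+1$ and having height $\delta_k = \sin((k+1)\theta)$. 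As in Lemma \ref{mododd}, the width at level $k$ is $w_k = 1 + 2(\cos\theta + \cdots + \cos k\theta)$, which by Equation (\ref{dirichlet}) equals $\sin((k+\tfrac12)\theta)/\sin(\theta/2)$.

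The structural heart of the argument --- and the only real difference from Lemma \ref{mododd} --- is to see how these strips pair into cylinders. Since the $2n$-gon is centrally symmetric, identifying opposite parallel edges glues the $k$-th strip to the $(n-2-k)$-th strip along their slanted sides, so the two form a single cylinder. These two strips have equal height, because $\delta_{n-2-k} = \sin((n-1-k)\theta) = \sin(\pi - (k+1)\theta) = \delta_k$, and the cylinder is a union of two isosceles trapezoids: a typical cylinder. When $n$ is odd the $n-1$ strips pair up completely and every cylinder is typical; when $n$ is even the middle strip $k_0 = n/2 - 1$ is fixed by the pairing, and since $w_{k_0+1}-w_{k_0} = 2\cos((k_0+1)\theta) = 2\cos(\pi/2) = 0$ it is a rectangle with vertical gluing edges --- the unique exceptional cylinder.

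It then remains to compute moduli as (circumference)/(height). For a typical cylinder the core curves sweep both paired strips at once, so the height is $\delta_k$, while tracing a core curve shows it closes up with the constant length $w_k + w_{n-2-k}$. Using Equation (\ref{dirichlet}) and the sum-to-product formula,
$$w_k + w_{n-2-k} = \frac{\sin((k+\tfrac12)\theta) + \sin((n-\tfrac32-k)\theta)}{\sin(\theta/2)} = 2\cot(\theta/2)\,\sin((k+1)\theta),$$
where the last equality uses $\tfrac{n-1}{2}\theta = \tfrac\pi2-\tfrac\theta2$ and $(k+1-\tfrac n2)\theta = (k+1)\theta-\tfrac\pi2$. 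Hence every typical cylinder has modulus $2\cot(\theta/2) = 2\cot(\pi/2n)$, independent of $k$. In the even case the middle rectangle has height $\delta_{k_0} = \sin(\pi/2) = 1$ and circumference $w_{k_0} = \cos(\theta/2)/\sin(\theta/2) = \cot(\theta/2)$, hence modulus exactly $\tfrac12$ of the typical value. This exhibits the decomposition as perfect with common modulus $M = 2\cot(\theta/2) = 2\cot(\pi/2n)$, i.e. $2\cot$ of half the exterior angle, exactly as in Lemma \ref{mododd}.

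I expect the pairing step to be the main obstacle. In Lemma \ref{mododd} each cylinder is transparently two congruent trapezoids, one from each of the two polygons; here I must instead follow the opposite-edge identifications within a single polygon to prove that strip $k$ is glued to strip $n-2-k$ and that the induced core curves close up with the constant circumference $w_k + w_{n-2-k}$. Once this combinatorial and topological bookkeeping is in place, the trigonometric simplification above is routine and mirrors the odd case.
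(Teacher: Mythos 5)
Your proof is correct and follows essentially the same route as the paper's: cut the polygon into horizontal strips between consecutive vertex levels, compute the level widths $w_k$ via Equation (\ref{dirichlet}), observe that the opposite-edge identifications pair the strips into cylinders made of two isosceles trapezoids, and treat the middle strip of a $4m$-gon as a self-glued rectangle of half the common modulus.

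Two remarks are worth recording. First, you prove more than the paper writes down: the paper disposes of the typical cylinders with the single sentence ``the calculation is the same as in Lemma \ref{mododd},'' whereas you explicitly verify the pairing $k \leftrightarrow n-2-k$ of strips and the closing up of the core curve, which is exactly the bookkeeping that sentence elides. Your sum-to-product evaluation of $w_k + w_{n-2-k}$ is equivalent to the paper's computation, since the top-bottom symmetry of the even polygon gives $w_{n-2-k} = w_{k+1}$, reducing your circumference to the $w_k + w_{k+1}$ of Lemma \ref{mododd} and Equation (\ref{cotone}). Second, your constant $2\cot(\pi/(2n))$, read with the polygon having $2n$ edges, is the numerically correct one: for the octagon ($n=4$) the typical cylinder has circumference $2+\sqrt{2}$ and height $\sqrt{2}/2$, hence modulus $2+2\sqrt{2} = 2\cot(\pi/8)$, not $2\cot(\pi/4)=2$. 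Your answer agrees with the stated $2\cot(\pi/n)$ only under the convention --- which the paper's own proof silently adopts --- that $n$ denotes the total number of edges of the even polygon: note that the proof computes the rectangle modulus of the $4m$-gon as $\cot(\pi/n)$, i.e.\ with $n=4m$, and that Lemma \ref{euclid} and the Remark following Definition \ref{pts} likewise use $n$ for the edge count. So the apparent mismatch between your conclusion and the lemma's literal statement is an indexing slip in the paper, not a gap in your argument; you were right not to force your computation to match the printed constant.
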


\begin{proof} We will prove that each cylinder of a single regular $2n$-gon surface is the union of two isosceles trapezoids (typical), or is one rectangle (exceptional). In the typical case, the cylinder has modulus $2\cot \pi/n$. In the exceptional case, the cylinder has modulus $\cot \pi/n$. 

The region that lies between a pair of  non-horizontal or non-vertical edges of an even-gon is an isosceles trapezoid. It is not a triangle, because $2n$-gons have top and bottom horizontal edges. For the cylinders that are the union of two trapezoids, the calculation is the same as in Lemma \ref{mododd}, so the modulus is $2\cot\pi/n$. 

A $4m$-gon with horizontal edges also has two vertical edges, which are identified. The cylinder glued along the vertical edge consists of a single rectangle. The rectangle's horizontal edges are between (using the same notation as in Lemma \ref{mododd}) levels $m-2$ and $m-1$, so the width is \mbox{$1+2(\cos\theta+\ldots+\cos m\theta)$} and its height is $1$, so its modulus is just the width:
$$1+2\cos\theta+\ldots+2\cos(m\theta)=\frac{\sin((m+1/2)\theta)}{\sin(\theta/2)},$$
by applying Equation (\ref{dirichlet}). 

Applying the angle-addition formula, we can rewrite this as
$$\frac{\sin (m\theta) \cos (\theta/2) + \cos (m\theta)\sin (\theta/2)}{\sin (\theta/2)}=\cot \theta/2=\cot\pi/n,$$
since $m\theta = m(2\pi/(4m)) = \pi/2$, so $\sin (m\theta)=1,\cos(m\theta)=0$. 
This is a single rectangle of modulus $\frac 12 (2\cot\pi/n)$, so the surface is perfect.
\end{proof}

\begin{lemma}\label{euclid}
For a regular $n$-gon surface, $\ts = \pi/n$.
\end{lemma}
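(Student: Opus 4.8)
The plan is to show that \emph{every} slanted edge of a regular $n$-gon surface makes the same angle $\pi/n$ with the positive horizontal; since $\ts$ is by definition (Definition \ref{ts}) the minimum of these angles, the claim $\ts = \pi/n$ then follows immediately. So the whole problem reduces to computing the slope of a single, arbitrary slanted edge and checking that it does not depend on which edge we choose.

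First I would set up coordinates in one regular $n$-gon with a horizontal bottom edge, writing $\theta = 2\pi/n$ for the exterior angle. Recall from the proof of Lemma \ref{mododd} that the horizontal width at level $k$ is $1 + 2(\cos\theta + \cdots + \cos k\theta)$ and that the vertical gap between levels $k$ and $k+1$ is $\sin((k+1)\theta)$. A slanted edge is the positive diagonal of the trapezoid lying between two consecutive levels, running from the bottom-left vertex to the top-right vertex; by the vertical symmetry of the trapezoid its horizontal run is exactly half the sum of the widths at levels $k$ and $k+1$, while its vertical rise is $\sin((k+1)\theta)$. Applying identity (\ref{cotone}) exactly as in Lemma \ref{mododd}, the sum of the two widths is $2\cot(\pi/n)\sin((k+1)\theta)$, so the run is $\cot(\pi/n)\sin((k+1)\theta)$ and the slope is
$$\frac{\sin((k+1)\theta)}{\cot(\pi/n)\,\sin((k+1)\theta)} = \tan(\pi/n),$$
independent of $k$. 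Equivalently, since the common modulus is $M = 2\cot(\pi/n)$ (Lemmas \ref{mododd} and \ref{modeven}), this says the positive diagonal of a typical cylinder has slope $2/M$.

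Next I would dispose of the remaining shapes. For the \emph{exceptional} rectangular cylinder that appears in single even polygons, Lemma \ref{modeven} gives modulus $M/2 = \cot(\pi/n)$ and height $1$, so its positive diagonal again has slope $1/\cot(\pi/n) = \tan(\pi/n)$; the \emph{degenerate} triangular cylinders (for instance at the tips of odd double polygons) are the limiting case of a trapezoid with vanishing top width, where the run equals half the base, again giving slope $2/M = \tan(\pi/n)$. Thus in all cases each slanted edge has slope $\tan(\pi/n)$, and since $\pi/n \in (0,\pi/2)$ we have $\arctan(\tan(\pi/n)) = \pi/n$. Hence every slanted edge makes angle $\pi/n$ with the horizontal, and therefore $\ts = \pi/n$.

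The hard part is not any single computation but seeing \emph{why} the slope comes out the same for every slanted edge. The key mechanism is the cancellation in identity (\ref{cotone}): it forces the horizontal run of each positive diagonal to be proportional to its vertical rise with the same constant $\cot(\pi/n)$, uniformly in $k$. Packaging this as the clean statement ``slope $= 2/M$'' is what lets the typical, degenerate, and exceptional configurations be handled by one argument, so the only real care needed is to confirm the run-equals-half-circumference relation separately in each of these three geometric cases.
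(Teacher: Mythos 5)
Your proof is correct, but it takes a genuinely different route from the paper's. The paper's proof is synthetic and essentially two lines long: a regular polygon is cyclic, so each slanted edge and the horizontal chord at its lower level are two chords of the circumscribed circle meeting at a vertex; the angle between them is an inscribed angle subtending the arc of a single polygon edge, and the Central Angle Theorem makes that angle half of the central angle $2\pi/n$, i.e.\ $\pi/n$ --- uniformly for every slanted edge, with the degenerate triangle tips and the vertical-edge rectangle of the $4m$-gon handled by the very same sentence, since their slanted edges are still chords. You instead verify the same fact by coordinates, reusing the level-width formula $1+2(\cos\theta+\cdots+\cos k\theta)$ and identity (\ref{cotone}) from Lemma \ref{mododd} to show each positive diagonal has run $\cot(\pi/n)\sin((k+1)\theta)$ against rise $\sin((k+1)\theta)$; your bookkeeping is right (in particular, run equals half the sum of the two level widths by vertical symmetry, and the width formula vanishing at the apex makes the triangle case a genuine limit of your trapezoid computation, so your three cases do all close up). What each approach buys: yours reuses machinery already built for the modulus computation and produces the structural byproduct that every slanted edge has slope $2/M$, tying $\ts$ directly to the common modulus; the paper's is shorter, computation-free, and conceptually explains \emph{why} all the angles coincide. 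One caution about your closing remark: the ``slope $=2/M$'' packaging is special to regular polygon surfaces, where the two trapezoids of a cylinder are congruent (up to flip) so each diagonal's run is half the circumference. On a general perfect surface the two trapezoids of a cylinder have different bottom-plus-top widths, the two slanted edges of one cylinder have different slopes, and $\ts$ is a genuine minimum --- this is exactly what Figure \ref{tsdiff} shows for the $(5,4)$ Bouw--M\"oller surface --- so your uniform-slope conclusion is a theorem about regular polygons, not a general mechanism, and the inscribed-angle argument is the cleaner explanation of that coincidence.
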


\begin{proof}
A regular polygon is cyclic.
The angle of each slanted edge can be taken as the angle between two of the polygon's diagonals. Each of these pairs of diagonals spans a chord of the circle.
The Central Angle Theorem says that this angle is half that of the central angle spanned by the same chord. The result follows.
\end{proof}



\begin{theorem} \label{regularsandwich}
Consider a regular $n$-gon surface, and let $M=2\cot\pi/n$. Consider a geodesic trajectory on the surface whose angle $\theta$ satisfies $\theta \in [0,\pi/n)$, and its associated cutting sequence. Then the derived sequence consists precisely of the sandwiched letters, read from left to right. 
\end{theorem}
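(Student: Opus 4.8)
The plan is to deduce Theorem \ref{regularsandwich} directly from the Main Theorem \ref{mainthm} by translating its marking rule into the sandwiching condition of Definition \ref{sandwiched}. First I would verify that a regular $n$-gon surface meets the hypotheses of Theorem \ref{mainthm}: by Lemmas \ref{mododd} and \ref{modeven} it is a perfect translation surface with common modulus $M=2\cot\pi/n$, and by Lemma \ref{euclid} its threshold angle is $\ts=\pi/n$. Hence the admissible range $\theta\in[0,\pi/n)$ is exactly $[0,\ts)$, so Theorem \ref{mainthm} applies and tells us that the derived sequence is precisely the set of marked letters: the gluing-edge letters that are the middle letter of a $(00)$ or $(11)$ word, together with every horizontal-edge letter.

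It then remains to show that, for regular polygon surfaces, a letter is marked in this sense if and only if it is sandwiched. The $(00)$ direction is immediate and requires no regularity: by the computation in Lemma \ref{hitisgood} a $(00)$ word is literally of the form $ABA$, so its middle letter is flanked by two copies of the same label and is sandwiched. The content is in the $(11)$ case, where Lemma \ref{hitisgood} shows the word has the form $LBK$, with $L$ and $K$ the gluing edges of the cylinders directly below and above the one for which $B$ is a gluing edge. Here I would invoke the symmetry of the regular polygon --- that opposite edges are parallel and identified, together with the rotational symmetry by $2\pi/n$ and the vertical reflection --- to show that $L$ and $K$ carry the same label, so that $LBK$ is in fact $LBL$ and $B$ is sandwiched.

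For the converse direction I would show that the \emph{unmarked} gluing-edge cases, the $(01)$ and $(10)$ words of Definition \ref{zeroone}, produce middle letters that are not sandwiched: the same symmetry analysis pins down the flanking labels and exhibits that they differ. This establishes that among gluing-edge letters, ``marked'' and ``sandwiched'' coincide. Finally I would treat the horizontal-edge letters, which Theorem \ref{mainthm} always marks: using the vertical reflection symmetry of the polygon (and, for the double odd case, the reflection relating the two polygons), the edges crossed immediately before and immediately after a horizontal edge are mirror images of one another and so carry the same label, making every horizontal-edge letter sandwiched as well. Combining the four word-types with the horizontal case shows the marked letters are exactly the sandwiched ones.

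The main obstacle is precisely this symmetry bookkeeping in the $(11)$, $(01)/(10)$, and horizontal cases: one must identify the actual edge labels on the cylinders adjacent to the central edge and verify the coincidence (or non-coincidence) of the flanking labels. This step does not follow formally from the Main Theorem but genuinely uses that the polygon is regular, and I expect the double odd-polygon and single even-polygon cases to require slightly different label tracking, owing to their distinct identification patterns and to the exceptional vertical cylinder present in a $4m$-gon. For that reason the cleanest route is to argue uniformly from the rotational and reflective symmetries of the surface rather than to enumerate configurations case by case.
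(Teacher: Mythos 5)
Your proposal is correct and takes essentially the same approach as the paper: you verify the hypotheses of Theorem \ref{mainthm} via Lemmas \ref{mododd}, \ref{modeven} and \ref{euclid} (so $M=2\cot\pi/n$ and $\ts=\pi/n$), then use the symmetry of the regular polygon to show that the marked letters --- gluing-edge middle letters of $(00)$/$(11)$ words plus all horizontal-edge letters --- coincide exactly with the sandwiched ones. The only difference is organizational: the paper runs the case analysis by the sandwiched/non-sandwiched dichotomy and level configurations, whereas you argue the contrapositive arrangement by word type, correctly isolating (as the paper does implicitly) that the $(00)$ case is automatic while the $(11)$, $(01)/(10)$, and horizontal cases are where regularity is genuinely used.
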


To streamline the proof, we introduce the following terminology:

\begin{definition}
If a given letter in the cutting sequence survives into the derived sequence, we say that it is \emph{kept}.
\end{definition}

Theorem \ref{mainthm} says that an gluing edge label is kept if and only if it is the middle letter of a $(00)$ or $(11)$ word. We must show that this happens for regular polygon surfaces if and only if the edge label is sandwiched. We will show this for gluing edges and for horizontal edges. This will prove the result.

\begin{proof}
By Lemmas \ref{mododd} and \ref{modeven}, regular polygon surfaces are perfect with common modulus $M$, and by Lemma \ref{euclid}, $\ts = \pi/n$. So the hypotheses of Theorem \ref{mainthm} are satisfied, and we may apply it. 

Suppose that a given gluing edge label is sandwiched. Then it occurs in the middle of a three-label sequence $E_1 E_2 E_1$ for some edge labels $E_1,E_2$ (not necessarily distinct).  If $E_1$ and $E_2$ are at the same level, then the transition $E_1 E_2$ is type $(0)$, and the transition $E_2 E_1$ is also case $(0)$, so it is case $(00)$ and by Theorem \ref{mainthm}, $E_2$ is kept. Similarly, if $E_1$ and $E_2$ are on different levels, then both transitions are type $(1)$, so it is case $(11)$ and by Theorem \ref{mainthm}, $E_2$ is kept.

Suppose that a given gluing edge label is not sandwiched. Then it occurs in the middle of a three-label sequence $E_1 E_2 E_3$ for some edges $E_1,E_2, E_3$ not necessarily all distinct, but $E_1\neq E_3$. Because regular polygon surfaces are perfect, each cylinder has at most two gluing edges, so at least two of the edges must be in different levels. If $E_1,E_3$ are in the same level and $E_2$ is in a different level, then $E_1=E_3$ by symmetry, so $E_2$ is sandwiched, a contradiction. If $E_1,E_2$ are in the same level and $E_3$ is in a different level, or if $E_2,E_3$ are in the same level and $E_1$ is in a different level, these are cases $(01)$ and $(10)$, respectively, so by Theorem \ref{mainthm}, $E_2$ is not kept.

Theorem \ref{mainthm} says that a horizontal edge label is always kept. By symmetry, a horizontal edge label is always sandwiched in regular polygon surfaces. So a horizontal edge label is kept if and only if it is sandwiched.
\end{proof}

\section{Introduction to \mbox{Bouw-M$\mathbf{\ddot{o}}$ller Surfaces}}
\label{bmintro} 
In Lemmas \ref{mododd} and \ref{modeven}, we showed that for the $n$-indexed family of regular $n$-gon surfaces, the cylinder moduli are equal, or in a $2:1$ ratio. It turns out that there is an $(m,n)$-indexed family of surfaces whose cylinder moduli are all equal, called \emph{\bm surfaces}.

As discussed in the introduction, after Irene Bouw and Martin M\"oller gave algebraic models for their \bm surfaces \cite{BM}, Pat Hooper gave a polygon decomposition for them \cite{Hooper}, which we will use here. The polygon decomposition gives a family of translation surfaces created by identifying opposite parallel edges of a collection of $m$ ``semi-regular'' polygons that each have $2n$ edges. 

In Definitions \ref{semiregular} and \ref{bmdefinition}, we  define \bm surfaces. In Lemmas \ref{bmlevel} and \ref{modbm}, we  show that they are perfect translation surfaces, so we can apply the main Theorem \ref{mainthm}, which we do in Chapter \ref{bmresults}. 

The language of Definitions \ref{semiregular} and \ref{bmdefinition} is taken directly from \cite{Hooper}.

\begin{definition} \label{semiregular}
The $(a,b)$ semi-regular $2n$-gon has edge vectors given by:
\[
\mathbf{v}_i = 
\begin{cases}
 a \ [\cos \frac{i\pi}{n},\sin  \frac{i\pi}{n}] & \text{if }i\text{ is even} \\
 b\ [\cos \frac{i\pi}{n}, \sin  \frac{i\pi}{n}] & \text{if }i\text{ is odd}
\end{cases}
\]
for $i=0,\ldots,2n-1$. Denote this $2n$-gon by $P_n (a,b)$. The edges whose edge vectors are $\mathbf{v_i}$ for $i$ even are called \emph{even edges}. The remaining edges are called \emph{odd edges}. We restrict to the case where at least one of $a$ or $b$ is nonzero. If $a$ or $b$ is zero, $P_n (a,b)$ degenerates to a regular $n$-gon.
\end{definition}


\begin{lemma} \label{bmlevel} \label{srspecial}
Semi-regular polygons are special.
\end{lemma}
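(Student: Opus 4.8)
The plan is to verify directly that $P_n(a,b)$ satisfies the three conditions of Definition \ref{specialdef}: convexity, levelness, and vertical symmetry. Throughout I would work from the edge-vector description of Definition \ref{semiregular}, writing $\mathbf{v}_i = \ell_i[\cos(i\pi/n),\sin(i\pi/n)]$ with $\ell_i = a$ for $i$ even and $\ell_i = b$ for $i$ odd, and I would treat the nondegenerate case $a,b>0$ (if one of them vanishes the polygon degenerates to a regular $n$-gon, which is visibly special, and can be mentioned as a limiting case).

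For convexity and closure, the point is that the edge directions are $i\pi/n$ for $i=0,\dots,2n-1$, a strictly increasing sequence whose consecutive differences are all equal to $\pi/n$, with the wrap-around step from direction $(2n-1)\pi/n$ back to $0$ also equal to $\pi/n$. Thus the boundary turns left by the same positive exterior angle $\pi/n$ at every vertex, for a total turning of $2n\cdot(\pi/n)=2\pi$; a closed curve with positive edge lengths and consistent positive turning summing to $2\pi$ is a convex polygon. Closure (that the edge vectors sum to zero) I would get by grouping: the even-index directions $\{2k\pi/n\}$ and the odd-index directions $\{(2k+1)\pi/n\}$ are each a full set of $n$th roots of unity (the second rotated by $\pi/n$), so each group of unit vectors sums to $0$, giving $\sum_i \mathbf{v}_i = a\cdot 0 + b\cdot 0 = 0$.

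For vertical symmetry, I would consider the reflection $\rho:(x,y)\mapsto(-x,y)$ and track its effect on the edge vectors. Reflection sends the direction $i\pi/n$ to $\pi - i\pi/n$, and since $\rho$ reverses boundary orientation, the natural matching is $\rho(\mathbf{v}_i) = -\mathbf{v}_{(2n-i)\bmod 2n}$. The key observation is that $i$ and $-i$ have the same parity modulo $2$, so $\ell_{(2n-i)\bmod 2n} = \ell_i$ and the lengths match: hence $\rho$ carries the cyclic sequence of edge vectors of $P_n(a,b)$ to a cyclic reordering of itself. Since a convex polygon is determined up to translation by its cyclic sequence of edge vectors, $\rho(P_n(a,b))$ is a translate of $P_n(a,b)$; and because $\rho$ preserves $y$-coordinates the required translation has no vertical component, so composing $\rho$ with a horizontal translation is a reflection in a vertical line fixing the polygon. (Alternatively one can verify the symmetry by directly computing the vertices, as in the pairing $P_0\leftrightarrow P_1,\ P_2\leftrightarrow P_5,\dots$ for the semi-regular hexagon.) I expect this parity-and-orientation bookkeeping to be the main obstacle: it is easy to mismatch indices and conclude falsely that reflection swaps the $a$- and $b$-edges when $n$ is odd, whereas the orientation reversal is exactly what restores the parity.

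Finally, levelness follows from the two structural facts already in hand. The only horizontal edges are those with direction $0$ or $\pi$, namely $i=0$ and $i=n$, which sit at the minimum and maximum heights; removing them splits the boundary into a left chain and a right chain, each strictly monotone in height by convexity. Vertical symmetry interchanges the two chains, so they realize exactly the same set of vertex-heights, and each height is attained once on each chain. Consequently, a horizontal line through any vertex meets the boundary of the convex region precisely at the two mirror-image vertices at that height (the endpoints of the horizontal chord), and therefore crosses no edge transversally at an interior point, which is the content of Definition \ref{leveldef}. Combining the three verifications shows $P_n(a,b)$ is convex, level, and vertically symmetric, hence special.
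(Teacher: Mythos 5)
Your proposal is correct and takes essentially the same route as the paper: both verify the three conditions of Definition \ref{specialdef} directly from the edge-vector formula, with the same central identity that $\mathbf{v}_{2n-i}$ is the mirror image of $\mathbf{v}_i$ with matching length coefficient because $2n-i$ has the same parity as $i$. Your write-up is somewhat more thorough than the paper's (adding the closure check via roots of unity and deducing levelness from symmetry plus monotone side chains, where the paper reads it off directly from the equal $y$-components), but the underlying argument is the same.
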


\begin{proof}
Recall Definition \ref{specialdef}, that a polygon is \emph{special} if it is convex, level and vertically symmetric.

Semi-regular polygons are convex because the exterior angle increases monotonically from $0$ to $(2n-1)/n$.

To show that they are level, we must show that vector $\mathbf{v}_i$ has the same $y$-value as vector $\mathbf{v}_{2n-i}$. To show that they are vertically symmetric, we will show that the region of a semi-regular polygon between two levels is an isosceles trapezoid or triangle. To do this, we will show that the $x$-values of $\mathbf{v}_i$ and $\mathbf{v}_{2n-i}$  have the same magnitude. If $i$ is even, 
\begin{eqnarray*}
\mathbf{v}_{2n-i} = a  \left [ \cos \frac{(2n-i)\pi}{n},\sin  \frac{(2n-i)\pi}{n}\right ]   
&=& a \left [ -\cos \left(  \frac{i\pi}{n} \right),\sin \left(\frac{i\pi}{n} \right)\right]. 
\end{eqnarray*}
Since $\mathbf{v}_{i} = a \left [ \cos \left(  \frac{i\pi}{n} \right),\sin \left(\frac{i\pi}{n} \right)\right]$, this proves both results.

For $i$ odd, $a$ is replaced with $b$.
\end{proof}

\begin{corollary}
Each \bm surface admits the Veech element $R$, whose derivative is a vertical flip. This follows from the vertical symmetry.
\end{corollary}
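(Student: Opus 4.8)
The plan is to read this off as a direct instance of Lemma~\ref{flipexists} together with the special structure of \bm surfaces. The polygon-level input is already in hand: Lemma~\ref{srspecial} shows that each semi-regular $2n$-gon $P_n(a,b)$ is special, hence vertically symmetric, with its vertical reflection $I$ acting by $(x,y)\mapsto(-x,y)$. What remains is to see that the surface assembled from these polygons is itself a special translation surface, which by Definition~\ref{specialdef} means the edge identifications must be compatible with these vertical reflections.

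The crux --- and the only point requiring genuine checking --- is this compatibility. Since \bm surfaces are built by gluing opposite parallel edges (the rule spelled out in Definition~\ref{bmdefinition}), I would argue that the vertical reflection permutes parallel edge directions: reading the edge vectors from Definition~\ref{semiregular}, $I$ sends the edge with vector $\mathbf{v}_i$ to the edge with vector $\mathbf{v}_{2n-i}$, which has the same $y$-component and negated $x$-component (exactly the computation carried out in Lemma~\ref{srspecial}); consequently a direction and its opposite are each preserved as a parallel class. Therefore, if an edge $e_1$ is glued to an edge $e_2$, then $I(e_1)$ and $I(e_2)$ are again an identified pair, which is precisely the compatibility condition of Definition~\ref{specialdef}. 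This is the content of Lemma~\ref{bmspecial}, that every \bm surface has a special decomposition, and I would cite it for the general assembled surface.

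With the \bm surface now recognized as special, Lemma~\ref{flipexists} applies verbatim: reflecting each polygon through its axis of symmetry produces local maps that, by the compatibility above, patch into a single order-$2$ level automorphism $R$. Finally, because $R$ restricts on each polygon to the Euclidean reflection $(x,y)\mapsto(-x,y)$, its derivative is everywhere the constant matrix $\tts{-1}001$, i.e.\ the vertical flip, as claimed. The main obstacle is entirely concentrated in the compatibility check of the second paragraph; once the gluing is seen to respect vertical symmetry, the existence of $R$ and the form of its derivative are immediate from the general theory of special surfaces.
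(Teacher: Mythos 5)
Your proposal is correct and follows essentially the same route as the paper, which offers no separate proof but implicitly relies on exactly your chain: Lemma~\ref{srspecial} (semi-regular polygons are special), Corollary~\ref{bmspecial} (the assembled surface is special), and Lemma~\ref{flipexists} (a special surface admits the order-$2$ flip $R$ with derivative $\tts{-1}001$). Your explicit verification that the gluing of opposite parallel edges respects the vertical reflections is a welcome addition, since the paper leaves that compatibility condition unchecked.
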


\begin{definition} \label{bmdefinition}
The $(m,n)$ \bm surface is made by identifying the edges of $m$ semi-regular polygons $P(0),\ldots,P(m-1)$. Define $P(k)$ by

\[
P(k) = 
\begin{cases}
 P_n \left(\sin \frac{(k+1)\pi}{m}, \sin \frac{k\pi}{m}\right) & \text{if }m\text{ is odd} \\
P_n \left(\sin \frac{k\pi}{m}, \sin \frac{(k+1)\pi}{m}\right) & \text{if }m \text{ is even and } k \text{ is even} \\
P_n \left(\sin \frac{(k+1)\pi}{m}, \sin \frac{k\pi}{m}\right) & \text{if }m \text{ is even and } k \text{ is odd.} \\
 \end{cases}
\]
We form a surface by identifying the edges of the polygon in pairs. For $k$ odd, we identify the even edges of $P(k)$ with the opposite edge of $P(k+1)$, and identify the odd edges of $P(k)$ with the opposite edge of $P(k-1)$. The cases in the definition of $P(k)$ are chosen so that this gluing makes sense. 
\end{definition}


The $(m,n)$ \bm surface is a collection of $m$ polygons, where the first and $m$th are regular $n$-gons, and the $m-2$ middle polygons are semi-regular $2n$-gons (see Figure \ref{bmsurface}). If $m$ is odd, the central polygon is regular, 
 and if $m$ is even, the two central polygons are regular.

\begin{figure}[!h] 
\centering
\includegraphics[width=400pt]{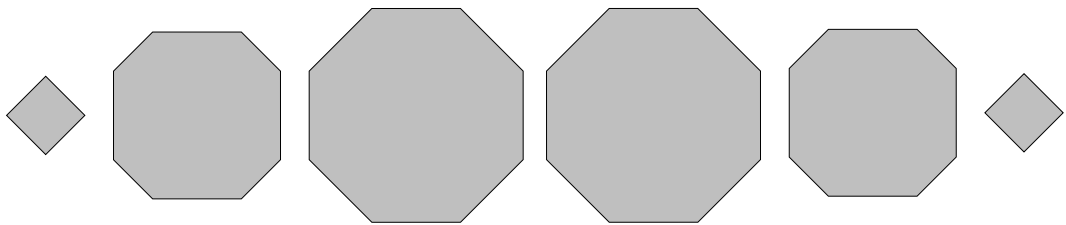}
\begin{quote}\caption{The \bm surface $M_{(6,4)}$. Edges of the square $P(0)$ on the far left are glued to oppositely-oriented parallel edges of the semi-regular octagon $P(1)$. The remaining edges of $P(1)$ are glued to the regular octagon $P(2)$, and so on. 
\label{bmsurface}} \end{quote}
\end{figure}

\bm surfaces are translation surfaces because each surface is a collection of polygons with opposite parallel edges identified. Each $2n$-gon has $n$ of its edges glued to the polygon on its left, and the other $n$ edges glued to the polygon on its right. Each of the $n$-gons on the ends is only glued to one other polygon.

\begin{corollary}[to Lemma \ref{srspecial}] \label{bmspecial} \bm surfaces are special. 
\end{corollary}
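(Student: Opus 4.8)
The statement to prove is Corollary \ref{bmspecial}: \bm surfaces are special.

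The plan is to reduce this corollary to Lemma \ref{srspecial} (semi-regular polygons are special) together with the structural requirements in Definition \ref{specialdef}. Recall that a \emph{special translation surface} is one obtained by gluing pairs of parallel edges of a finite union of \emph{special polygons}, subject to the compatibility condition: whenever edge $e_1$ of $P_1$ is glued to edge $e_2$ of $P_2$, we require $I_1(e_1)$ to be glued to $I_2(e_2)$, where $I_1, I_2$ are the vertical reflections of $P_1, P_2$. So there are exactly two things to verify: first, that every polygon in the \bm decomposition is a special polygon; second, that the gluing prescribed in Definition \ref{bmdefinition} satisfies this vertical-reflection compatibility condition.

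First I would observe that the first requirement is immediate. Each polygon $P(k)$ in Definition \ref{bmdefinition} is of the form $P_n(a,b)$ for suitable $a,b$ (a semi-regular $2n$-gon, degenerating to a regular $n$-gon at the ends), and Lemma \ref{srspecial} established that all semi-regular polygons are convex, level, and vertically symmetric, hence special. So every building block is a special polygon, and the finite union is as required.

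Second I would verify the compatibility condition. This is the main point of the argument, though it is not hard. The gluing in Definition \ref{bmdefinition} identifies an even edge $\mathbf{v}_i$ of $P(k)$ with the \emph{opposite} edge of the neighboring polygon, and opposite edges of a semi-regular $2n$-gon are exactly the pairs $\mathbf{v}_i$ and $\mathbf{v}_{i+n}$ (parallel and oppositely oriented). In the proof of Lemma \ref{srspecial}, it was shown that the vertical reflection $I$ of $P_n(a,b)$ sends $\mathbf{v}_i$ to (the edge associated with) $\mathbf{v}_{2n-i}$, since $\cos\frac{(2n-i)\pi}{n} = -\cos\frac{i\pi}{n}$ while the $y$-coordinate is preserved. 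I would therefore check that applying the two vertical reflections to a glued pair of opposite edges again yields a glued pair of opposite edges: if $\mathbf{v}_i$ of $P(k)$ is glued to the opposite edge of the neighbor, then $I(\mathbf{v}_i)$ corresponds to $\mathbf{v}_{2n-i}$, and I must confirm that the neighbor's reflection carries the matched edge to the edge opposite $\mathbf{v}_{2n-i}$. Because both polygons are vertically symmetric and the indexing of opposite edges ($i \leftrightarrow i+n$) commutes with the reflection index map ($i \leftrightarrow 2n-i$) — indeed $(2n-i)$ and $(2n-(i+n)) = (n-i)$ again differ by $n$ modulo $2n$ — the reflected pair is again an opposite (parallel) pair, so it is one of the glued pairs. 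This is precisely the compatibility condition.

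The main obstacle, such as it is, lies in bookkeeping rather than in any deep idea: one must keep straight the case distinctions in Definition \ref{bmdefinition} (the parities of $m$ and $k$ that determine which of $P_n(a,b)$ or $P_n(b,a)$ each $P(k)$ is, and whether even edges glue left or right), and confirm that the vertical reflection respects the even/odd edge distinction so that even edges map to even edges and odd to odd. Since Lemma \ref{srspecial} handled $i$ even and $i$ odd by the same computation (replacing $a$ with $b$), the reflection does preserve parity of the edge index, and the compatibility survives every case. Having verified both requirements of Definition \ref{specialdef}, I conclude that \bm surfaces are special, completing the corollary.
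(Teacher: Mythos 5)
Your proposal is correct and takes essentially the same approach as the paper, whose entire proof is one sentence citing Lemma \ref{srspecial} for the specialness of the polygons and Definition \ref{bmdefinition} for the decomposition. Your explicit verification of the reflection-compatibility condition (that the reflection index map $i \mapsto 2n-i$ preserves edge parity and commutes modulo $2n$ with the opposite-edge pairing $i \mapsto i+n$) is sound and fills in a detail the paper leaves implicit.
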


\begin{proof}
Lemma \ref{srspecial} shows that semi-regular polygons are special, and Definition \ref{bmdefinition} gives a decomposition of a \bm surface into semi-regular polygons.
\end{proof}

\begin{lemma} \label{modbm}
\bm surfaces are perfect.
\end{lemma}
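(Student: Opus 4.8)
The plan is to establish that \bm surfaces are \emph{perfect} in the sense of Definition \ref{pcd}: that is, to exhibit a common modulus $M$ such that every cylinder in the canonical horizontal cylinder decomposition is either typical (modulus $M$, two isosceles trapezoids) or exceptional (modulus $M/2$, one rectangle). By Corollary \ref{bmspecial} we already know \bm surfaces are special, so by Lemma \ref{hcdexists} they carry a canonical horizontal cylinder decomposition, and by Lemma \ref{trappart} each cylinder is partitioned into isosceles trapezoids. What remains is the modulus computation, so the heart of the proof is a trigonometric calculation showing that all cylinders share the \emph{same} modulus.

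First I would set up coordinates using the edge-vector description of Definition \ref{semiregular}: for the polygon $P(k) = P_n(a_k, b_k)$ with the appropriate $a_k, b_k$ from Definition \ref{bmdefinition}, I would compute the width (horizontal extent) of the cylinder lying between two consecutive horizontal levels, and its height. Because each cylinder is glued from two congruent trapezoidal pieces coming from adjacent polygons $P(k)$ and $P(k\pm 1)$, the cylinder width is a sum of horizontal projections of even and odd edge vectors, and the height is the common vertical rise between the two levels. The key computational step is to sum the horizontal contributions; the edge vectors have the form $a \cos(i\pi/n)$ and $b\cos(i\pi/n)$, so the partial sums are cosine sums of exactly the kind handled by Equations (\ref{cotone}) and (\ref{dirichlet}). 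I expect the width to telescope, via (\ref{cotone}) with $\theta = \pi/n$, into a product of $2\cot(\pi/2n)$ (or a closely related cotangent factor) with a sine term that matches the height, so that the modulus simplifies to the claimed constant $M = 2\cot\pi/n + 2\frac{\cos\pi/m}{\sin\pi/n}$ independent of the level $k$ and of which polygon the cylinder sits in.

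Next I would handle the structural case distinctions. The two end polygons are regular $n$-gons (where $a$ or $b$ vanishes), so their cylinders reduce to the regular-polygon computation already carried out in Lemmas \ref{mododd} and \ref{modeven}, which I can cite directly. For $m$ even there are two central regular polygons, and for $m$ odd one central regular polygon; these give the exceptional rectangular cylinders (modulus $M/2$) exactly as in Lemma \ref{modeven}, again by the same Dirichlet-kernel identity (\ref{dirichlet}). I would also verify that the pieces being glued across the identification in Definition \ref{bmdefinition} are genuinely congruent, so that two trapezoidal halves assemble into a single typical cylinder of the full modulus $M$; this follows from the vertical-symmetry compatibility built into the gluing.

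The main obstacle is the bookkeeping in the combined cosine sum: unlike the regular-polygon case, each \bm cylinder mixes even edges of length $a_k$ with odd edges of length $b_k$, and $a_k, b_k$ themselves are sines of multiples of $\pi/m$. So the width is a \emph{weighted} cosine sum, and the simplification requires combining (\ref{cotone}) with a product-to-sum expansion of terms like $\sin\frac{(k+1)\pi}{m}\cos\frac{i\pi}{n}$. The delicate point is confirming that the $k$-dependence cancels between numerator (width) and denominator (height) so that the modulus is truly constant across all cylinders and all polygons; I would isolate this as the crux and verify it by direct manipulation, with the detailed algebra deferred to the cited thesis computation (\cite{thesis}, Lemma 6.6).
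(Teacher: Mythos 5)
Your overall skeleton---specialness via Corollary \ref{bmspecial}, the trapezoid partition from Lemma \ref{trappart}, and a level-by-level cosine-sum computation of the common modulus with the heavy algebra deferred to (\cite{thesis}, Lemma 6.6)---matches the paper's proof. But your structural case analysis contains a concrete error: you assert that the central regular polygon(s) produce \emph{exceptional} rectangular cylinders of modulus $M/2$, ``exactly as in Lemma \ref{modeven}.'' The paper proves precisely the opposite. An exceptional cylinder arises only when a vertical edge of a polygon is identified with the opposite parallel edge of the \emph{same} polygon (as happens for the single regular $4m$-gon in Lemma \ref{modeven}); the gluing rules of Definition \ref{bmdefinition} glue every edge of $P(k)$ to an edge of a \emph{neighboring} polygon $P(k\pm1)$, so no such self-identification ever occurs, and every cylinder of a \bm surface is typical, i.e., the union of two isosceles trapezoids. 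Establishing this absence of exceptional cylinders is exactly part (a) of the paper's proof, and your proposal gets it backwards.

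Relatedly, your shortcut of citing Lemmas \ref{mododd} and \ref{modeven} directly for the cylinders touching the end $n$-gons does not work. Every cylinder of a \bm surface is assembled from pieces of \emph{two adjacent} polygons---its width is the sum of the horizontal segments at the relevant level in $P(k)$ and in $P(k\pm1)$---so no cylinder lies inside a single regular polygon, and none has the regular-polygon modulus $2\cot\pi/n$. The cylinders at the ends are merely the degenerate case of a typical cylinder in which one trapezoid is an isosceles triangle; their modulus is the same $M=2\cot\pi/n+2\frac{\cos\pi/m}{\sin\pi/n}$ as all the others. (Note also that $M/2$ does not equal the value $\cot\pi/n$ that the rectangle computation of Lemma \ref{modeven} yields, so your two claims are mutually inconsistent unless $\cos\pi/m=0$.) Once these two case-analysis claims are corrected, your remaining plan---weighted cosine sums with the $k$-dependence cancelling between width and height---is the right crux, and is what the cited thesis computation carries out.
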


\begin{proof}
Recall (Definition \ref{pts}) that a perfect translation surface is a special translation surface that has a perfect cylinder decomposition with some common modulus $M$. Corollary \ref{bmspecial} shows that \bm surfaces are special. Showing that \bm surfaces have a perfect decomposition requires that every cylinder (a) is the union of two isosceles trapezoids, and (b) has the same modulus. 

(a) We showed in Lemma \ref{bmspecial} that \bm surfaces are special, so the only way a cylinder could fail to be the union of two isosceles trapezoids is if it consisted of a single rectangle with just one gluing edge. This occurs when a vertical edge of a polygon is identified with an opposite parallel edge of the same polygon. By the gluing rules in Definition \ref{bmdefinition}, each polygon edge is glued to an edge of another polygon, not an edge of the same polygon, so this cannot occur, and every cylinder is the union of two isosceles trapezoids.

 (b) In fact, the edge lengths of \bm surfaces are chosen precisely so that the modulus is the same; see  \cite{BM}, Theorem $8.1$. The calculation is a elementary but lengthy trigonometric result: First, we calculate the length of the horizontal segment connecting the two vertices of polygon $P(k)$ at each level, and then add two of these together to find the width of each cylinder. A simple calculation gives the height of the cylinder, and then the modulus of the cylinder is the ratio of width to height. It turns out that the modulus of every cylinder is \mbox{$2\cot\pi/n +2 \frac{\cos \pi/m}{\sin \pi/n}$}. For details, see (\cite{thesis}, Lemma 6.6).

\end{proof}

\section{Results for  Bouw-M$\mathbf{\ddot{o}}$ller Surfaces}
 \label{bmresults} 
 In Lemma \ref{modbm}, we  showed that \bm surfaces are are perfect. This tells us that we may apply the main Theorem \ref{mainthm} to them, which is what we do in this chapter. In $\mathsection$ \ref{bmcs}, we prove Theorem \ref{bmthm}, which describes how to determine the derived sequence from a cutting sequence on a \bm surface, and we  work through an example. In $\mathsection$ \ref{td}, we describe the \emph{transition diagrams} for \bm surfaces, extending some of John Smillie and Corinna Ulcigrai's ideas for regular polygons (\cite{SU}, $\mathsection$ 2.1.2) to these surfaces.

\subsection{Cutting sequences on \bm surfaces} \label{bmcs}

\begin{theorem} \label{bmthm}
Let \mbox{$M=2\cot\pi/n +2 \frac{\cos \pi/m}{\sin \pi/n}$}, the modulus of the cylinders of the $(m,n)$ \bm surface $S$. Consider a geodesic trajectory  with $0 \leq \theta < \ts$, and its associated cutting sequence. Mark a letter in the  cutting sequence if: 
\begin{itemize}
\item It corresponds to a gluing edge, and it is the middle letter of a $(00)$ or $(11)$ word, or
\item It corresponds to a horizontal edge.
\end{itemize}
Then the derived sequence consists precisely of the marked letters, read from left to right.
\end{theorem}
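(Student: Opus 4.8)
The plan is to recognize Theorem \ref{bmthm} as a direct specialization of the Main Theorem \ref{mainthm} to the family of Bouw-M\"oller surfaces, so that essentially no new geometric work is required. The statements are in fact word-for-word identical once one substitutes the specific common modulus $M=2\cot\pi/n +2 \frac{\cos \pi/m}{\sin \pi/n}$ for the abstract common modulus appearing in Theorem \ref{mainthm}. Thus the entire content of the proof is to verify that the $(m,n)$ \bm surface satisfies the hypotheses of the Main Theorem, namely that it is a perfect translation surface with the stated common modulus.

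First I would invoke Lemma \ref{modbm}, which establishes that every \bm surface is perfect: by Corollary \ref{bmspecial} the surface is special, and the cylinder computation sketched in that lemma shows that each cylinder is the union of two isosceles trapezoids all sharing the common modulus $2\cot\pi/n +2 \frac{\cos \pi/m}{\sin \pi/n}$. This is precisely the number $M$ named in the statement of Theorem \ref{bmthm}, so the perfectness hypothesis and the value of the common modulus are both accounted for. Next I would note that $\ts$ is defined intrinsically in Definition \ref{ts} as the smallest angle between a slanted edge and the positive horizontal, so the angle condition $0 \leq \theta < \ts$ in Theorem \ref{bmthm} is literally the condition $0 \leq \theta < \ts$ demanded by Theorem \ref{mainthm}; no separate computation of $\ts$ is needed here, in contrast to the regular polygon case where Lemma \ref{euclid} pinned down $\ts = \pi/n$.

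With the hypotheses verified, I would simply apply Theorem \ref{mainthm}: it tells us that the derived sequence consists of exactly the letters corresponding either to a gluing edge that is the middle letter of a $(00)$ or $(11)$ word, or to a horizontal edge, read left to right. Since these are verbatim the two marking rules in the statement of Theorem \ref{bmthm}, the conclusion follows immediately.

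I do not expect any genuine obstacle, since this theorem is an application rather than a new result; the only subtlety is bookkeeping, ensuring that the marking conditions in Theorem \ref{bmthm} match those in Theorem \ref{mainthm} and that the notions of gluing edge, horizontal edge, and the types $(00)$, $(01)$, $(10)$, $(11)$ are the same ones defined abstractly for perfect translation surfaces. All the real geometric labor was already discharged in Lemmas \ref{hitisgood}, \ref{goodishit}, \ref{traptrap} and \ref{horiz} underlying the Main Theorem, and in Lemma \ref{modbm} establishing perfectness. Hence the proof reduces to one or two sentences citing Lemma \ref{modbm} and Theorem \ref{mainthm}.
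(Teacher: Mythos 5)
Your proposal is correct and matches the paper's own proof, which likewise consists of a single sentence citing Lemma \ref{modbm} for perfectness with common modulus $M$ and then applying Theorem \ref{mainthm}. Your additional remarks on verifying the angle condition via Definition \ref{ts} and the matching of the marking rules are sound bookkeeping that the paper leaves implicit.
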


\begin{proof}
By Lemma \ref{modbm}, \bm surfaces are are perfect with common modulus $M$, so we may apply Theorem \ref{mainthm} to them, and the same result holds.
\end{proof}


\begin{example}
We will determine the derivation rule for cutting sequences on the $(3,4)$ \bm surface, shown in Figure \ref{34numbered}. (The ordering of the edge numbers may seem arbitrary, but it is chosen to simplify the \emph{transition diagrams} in Section \ref{td}.)

\begin{figure}[!h] 
\centering
\includegraphics[width=190pt]{hcd.eps} \hspace{1cm}
\includegraphics[width=190pt]{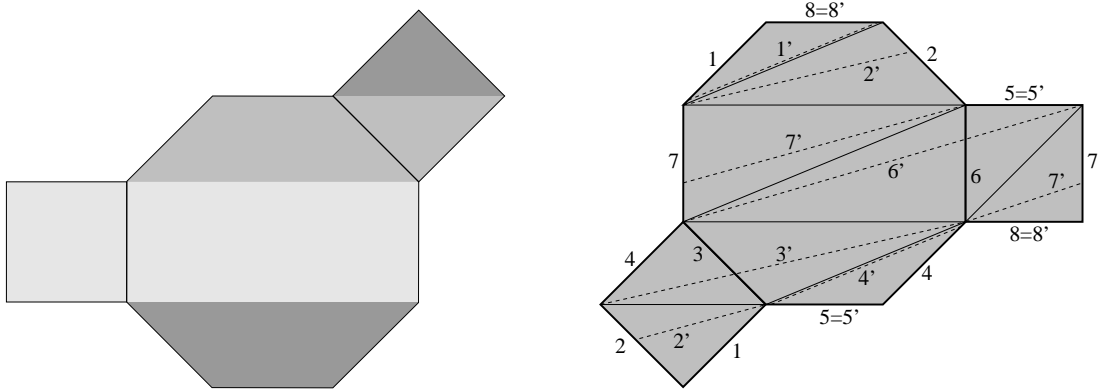}
\begin{quote}\caption{The $(3,4)$ \bm surface (a) showing the three cylinders, and (b) with edge labels, sheared edges (dotted lines) and auxiliary edges (thin) \label{34numbered}} \end{quote}
\end{figure}

We will consider the gluing edges one cylinder at a time, and then the horizontal edges.

First, we consider the cylinder with gluing edges $1$ and $2$. By Theorem \ref{bmthm}, $1$ and $2$ are kept if and only if they are the middle numbers of a $(00)$ or $(11)$ word. 

The $(00)$ word with $1$ as the middle number is $212$. The  $(00)$ and $(11)$ cases with $2$ as the middle number are  $121$ and $723$. The middle numbers are removed in the remaining $(01)$ and $(10)$ cases, which are $218$, $123$ and $721$.

By the same reasoning on the cylinder with gluing edges $3$ and $4$, $3$ is kept in the $(00)$ and $(11)$ cases $434$ and $236$, and removed in the other $(01)$ and $(10)$ cases $436$ and $234$. $4$ is kept in the $(00)$ case $343$ and removed in the $(10)$ case $543$. 

The gluing edges of the central cylinder are $6$ and $7$. The  $(00)$ cases are $767$ and $676$. The $(11)$ cases are $365$ and $872$. By Theorem \ref{bmthm}, the middle numbers are kept in these cases. The  $(01)$ cases are $765$ and $672$. The $(10)$ cases are $367$ and $876$. By Theorem \ref{bmthm}, the middle numbers are removed in these cases.

By Theorem \ref{bmthm}, horizontal edges $5$ and $8$ are always kept. Possible three-number transitions with middle numbers $5$ or $8$ are $654$ and $187$, so the middle number is kept in these cases.

The numbers that are kept and removed are summarized in Table \ref{34kept}. Of the $12$ three-letter transitions where the middle letter is kept, in the six $(00)$ cases the middle letter is sandwiched, and in the six $(11)$ cases, it is not.

\begin{table}[!ht]
\begin{center}
\begin{tabular}{|c|c|c|}
\hline
kept $(00)$ & kept $(11)$ & removed $(01),(10)$\\
\hline
212 & ~ & 218   \\
121 & 723 & 123, 721 \\
434 & 236 & 436, 234 \\
343 & ~ & 543  ~ \\
 & 654 & ~  ~ \\
767 & 365 & 765, 367 \\
676 & 872 & 672, 876 \\
~ & 187 & ~  ~ \\
\hline
\end{tabular}
\end{center}
\caption{All possible three-number transitions for the $(3,4)$ \bm surface, indicating when the middle number is kept. The table is sorted by middle number}
\label{34kept}
\end{table}

\end{example}

\subsection{Transition diagrams for \bm surfaces} \label{td}

To determine the derived sequence from a cutting sequences on a regular $2n$-gon surface, John Smillie and Corinna Ulcigrai used \emph{transition diagrams} (\cite{SU}, \mbox{$\mathsection$ 2.1.2}). In this section, we will describe the form of transition diagrams for \bm surfaces. Using the transition diagrams may yield a new proof for Theorem \ref{bmthm}, more similar to the proofs used in \cite{Davis}, \cite{SU} and \cite{SU2}.

\begin{definition}
A \emph{transition diagram} for a translation surface $S$ is a directed graph whose vertices are edge labels, and whose arrows connect two edge labels if a trajectory with $\theta \in [0,\ts)$ can intersect the first edge and then the second edge.
\end{definition}

\begin{example} \label{tdoctex}
We construct the transition diagram for the regular octagon surface (Figure \ref{tdoct}). For the regular octagon, $\ts = \pi/8$, so we consider trajectories with $\theta \in [0,\pi/8)$. A trajectory crossing horizontal edge $1$ must next cross edge $2$, so we connect $1$ to $2$ with an arrow. A trajectory crossing edge $2$ may next cross edge $1$ or edge $3$, so we connect $2$ to $1$ and to $3$. The rest of the transition diagram is constructed similarly.
\end{example}

\begin{figure}[!h] 
\centering
\includegraphics[width=400pt]{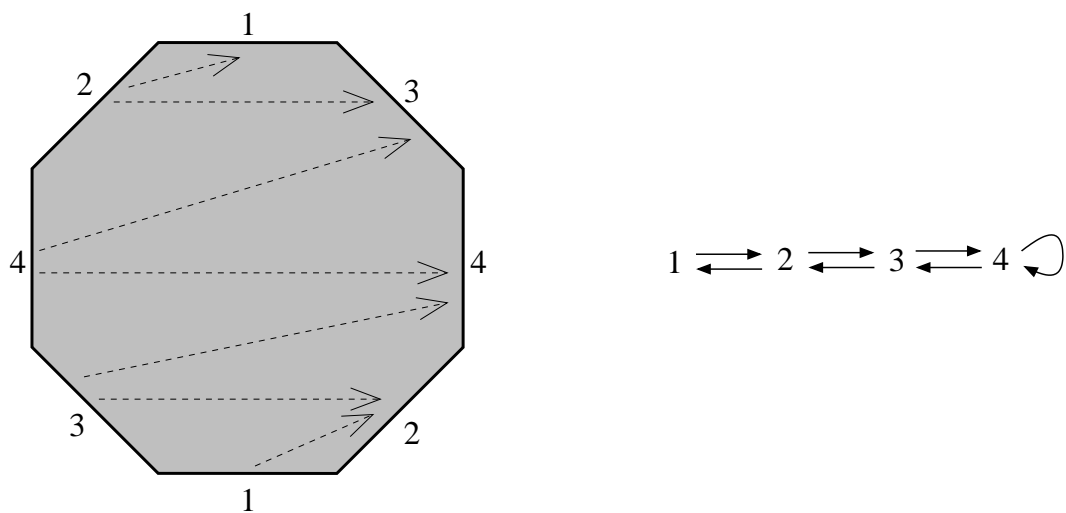} 
\begin{quote}\caption{The regular octagon surface, and its transition diagram \label{tdoct}} \end{quote}
\end{figure}

\begin{example}
We construct the transition diagram for the $(3,4)$ \bm surface (Figure \ref{td34}). For this surface, $\ts = \pi/8$, so we again consider trajectories with $\theta \in [0,\pi/8)$. A trajectory crossing edge $1$ then crosses edge $2$ or $8$, so we connect these with arrows. A trajectory crossing edge $2$ then crosses edge $1$ or $3$, so we connect these with arrows. The rest of the diagram is constructed similarly. 

Arranging the edge labels in numerical order in a straight line, as we did in Example \ref{tdoctex}, gives us the transition diagram shown at the top right of Figure \ref{td34}. We can rearrange the transition diagram into a rectangle, as shown at the bottom right of Figure \ref{td34}. 
\end{example}

\begin{figure}[!h] 
\centering
\includegraphics[width=400pt]{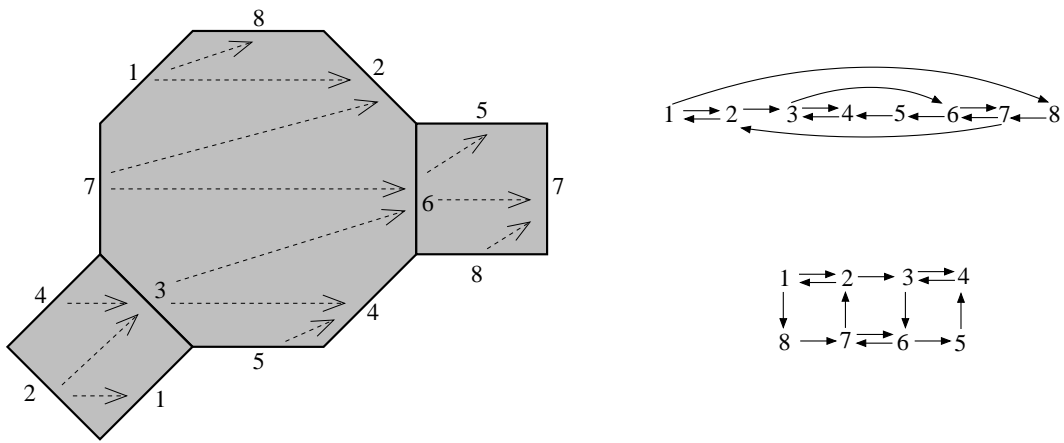} \hspace{1in}
\begin{quote}\caption{The $(3,4)$ \bm surface, and its transition diagram in two forms \label{td34}} \end{quote}
\vspace{-2.0em}
\end{figure}

It turns out that the rectangular form of the transition diagram in Figure \ref{td34} extends, and holds for \bm surfaces in general. Figure \ref{m6n5} shows the polygon decomposition and the transition diagram for the $m=6,n=5$ \bm surface. 

\begin{figure}[!h] 
\centering
\includegraphics[width=400pt]{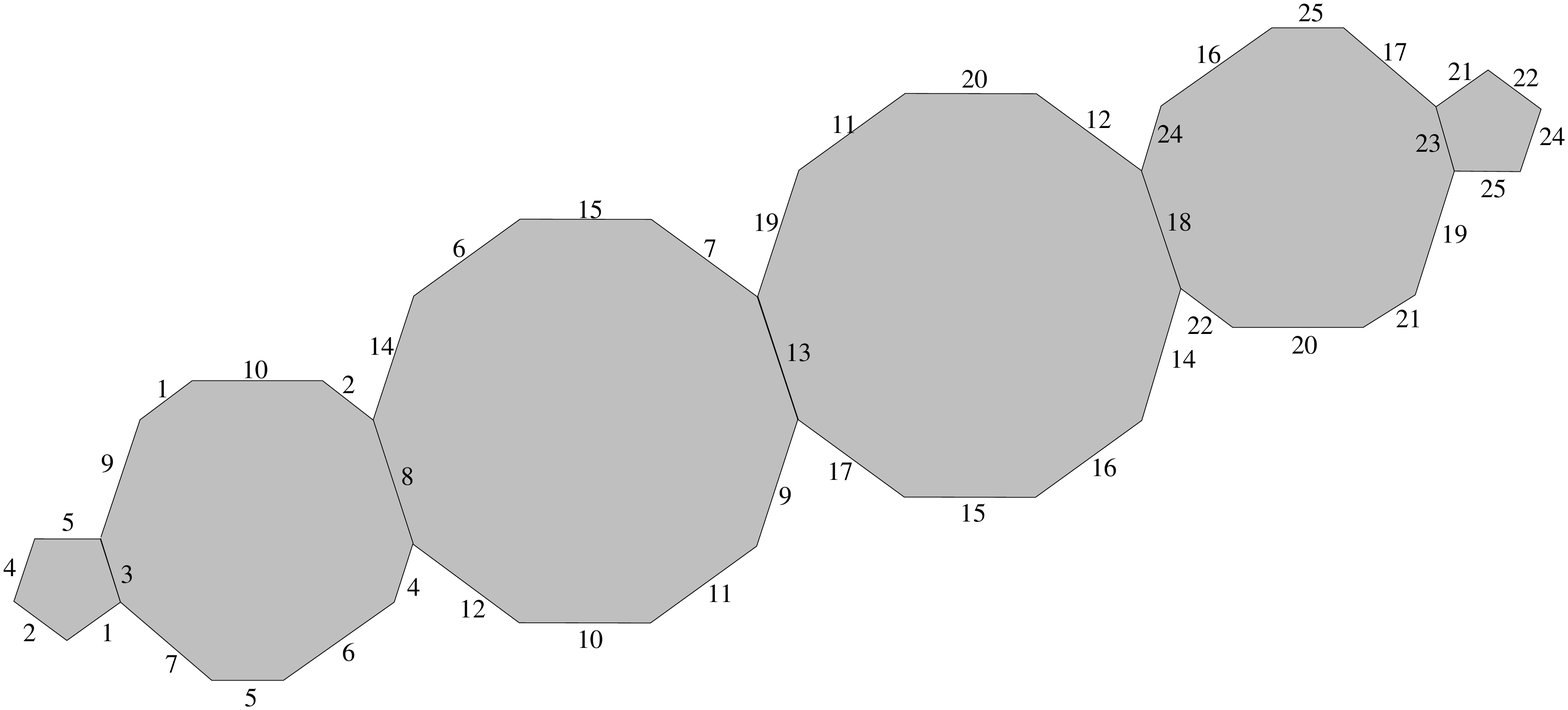}
\includegraphics[width=120pt]{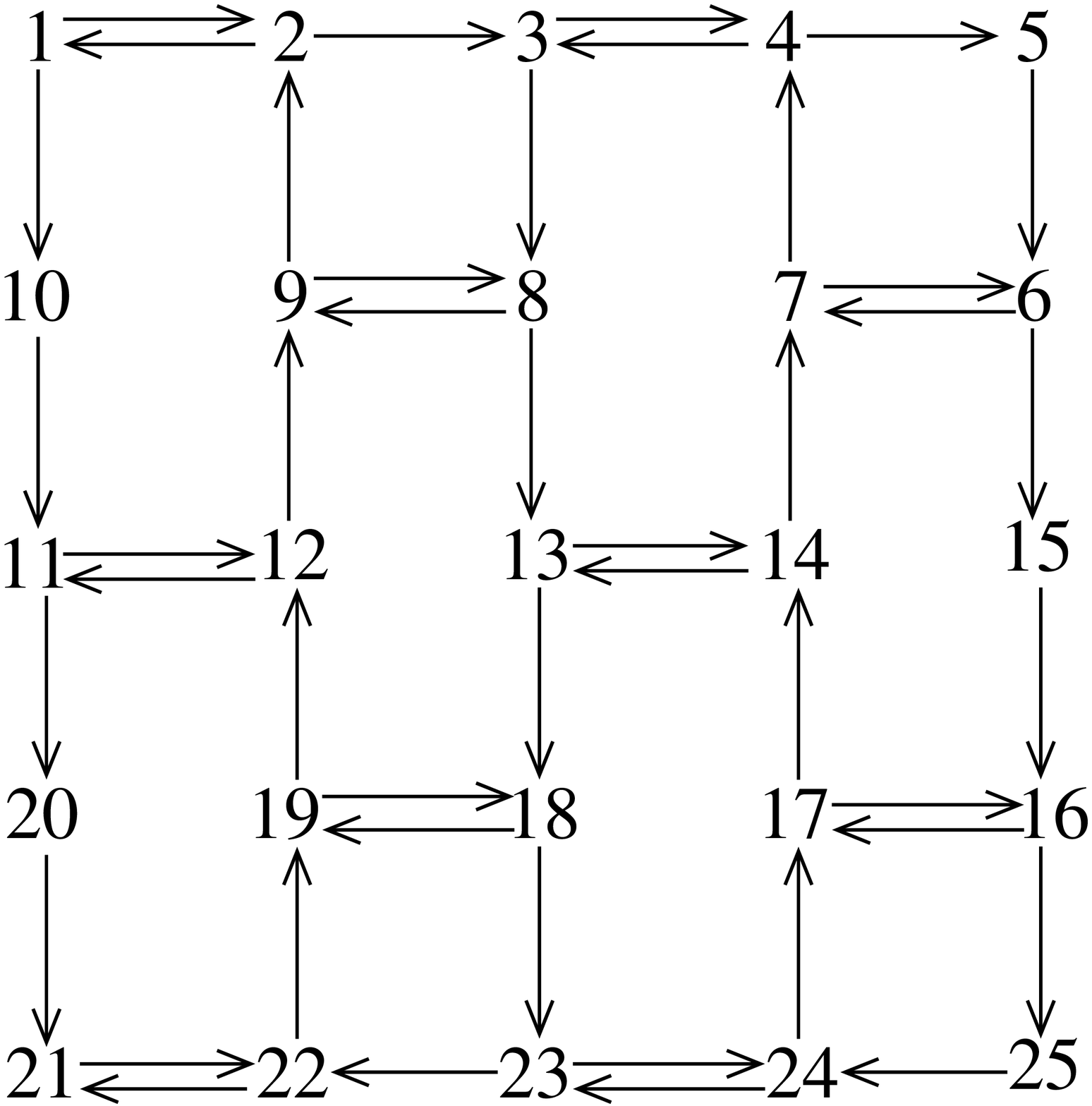} 
\begin{quote}\caption{The $(6,5)$ \bm surface and its transition diagram \label{m6n5}} \end{quote}
\vspace{-2.0em}
\end{figure}

There are $n$ numbers on each row, snaking back and forth, with $m-1$ rows in all. The edge labels in the $k$th row correspond to the edge labels that appear for the first time in $P(k-1)$. For example, in the $(3,4)$ \bm surface, $1,2,3,4$ appear in the first tilted square $P(0)$, $5,6,7,8$ appear in the octagon $P(1)$, and no new edge labels appear in the final square $P(2)$. 

The transition diagram for the $(m,n)$ \bm surface, with $m\geq 2, n\geq 3$, always follows the pattern of Figure \ref{m6n5}, with the upper-left corner the same and the diagram expanding (or contracting) down and to the right.
Showing that the transition diagrams always have this form is  tedious, so
we omit the proof.

\newpage



\end{document}